\newtheorem{nummer}{ }[]
\newtheorem{conres}{ }[]
\newtheorem{thm}[nummer]{\sc Theorem}
\newtheorem{prp}[nummer]{\sc Proposition}
\newtheorem{lem}[nummer]{\sc Lemma}
\newtheorem{cor}[nummer]{\sc Corollary}
\newtheorem{fct}[nummer]{\sc Fact}
\newtheorem{conc}[conres]{\sc Consistency Result}
\newcounter{faelle} 
\renewcommand{\thefaelle}{\rm(\arabic{faelle})}
\newcommand{\THM}{\sc Theorem}
\newcommand{\LEM}{\sc Lemma}
\newcommand{\FCT}{\sc Fact}
\newcommand{\PRP}{\sc Proposition}
\newcommand\cV{\mathcal{V}}
\newcommand\cM{\mathcal{M}}
\newcommand\modcV{\boldsymbol{\cV}}
\newcommand\modcM{\boldsymbol{\cM}}
\renewcommand\qed{\relax\ifmmode~\hfill$\dashv$\else\unskip\nobreak~\hfill$\dashv$\fi}
\def\epsilon{\varepsilon}
\newcommand{\ie} {{\sl i.e.}}
\newcommand{\eg} {{\sl e.g.}}
\newcommand{\On}{\Omega}
\newcommand{\pair}[2]{\langle #1,#2\rangle}
\newcommand{\la}{\langle}
\newcommand{\ra}{\rangle}
\newcommand{\nG}{G}
\newcommand{\Aut}{\operatorname{Aut}}
\newcommand{\Fix}{\operatorname{Fix}}
\newcommand{\fc}{\mathfrak{c}}
\newcommand\res[1]{|_{#1}}
\newcommand\cy[2]{\operatorname{cyc}\left(#1,#2\right)}
\newcommand{\fm}{\mathfrak{m}}
\newcommand{\fn}{\mathfrak{n}}
\newcommand{\fone}{\mathfrak{1}}
\newcommand{\ffive}{\mathfrak{5}}
\newcommand\ran{\operatorname{ran}}
\newcommand\dom{\operatorname{dom}}
\newcommand\fin{\operatorname{fin}}
\newcommand\seq{\operatorname{seq}}
\newcommand\Sym{\operatorname{sym}}
\newcommand\iseq{\operatorname{seq}^{\text{\rm\tiny{1-1}}}}
\newcommand{\card}[1]{|#1|}
\newcommand\ZFC{\text{\sf ZFC}}
\newcommand\ZF{\text{\sf ZF}}
\newcommand\ZFA{\text{\sf ZFA}}
\newcommand\AC{\text{\sf AC}}
\newcommand\pwr{\mathscr{P}}
\newcommand\nF{\mathscr{F}}
\newcommand\nE{\mathscr{E}}
\newcommand\nS{\mathscr{S}}
\newcommand{\blcb} {\big{\{}}
\newcommand{\brcb} {\big{\}}}
\newcommand\subs{\subseteq}
\renewcommand{\phi}{\varphi}
\renewcommand{\theta}{\vartheta}
\newcommand{\NDiag}{\text{\bfseries\sffamily{N}}}
\newcommand{\NrevD}{\text{\bfseries\sffamily{\reflectbox{N}}}}
\newcommand{\CDiag}{\text{\bfseries\sffamily{C}}}
\newcommand{\CrevD}{\text{\bfseries\sffamily{\reflectbox{C}}}}
\newcommand{\ZDiag}{\text{\bfseries\sffamily{Z}}}
\newcommand{\ZrevD}{\text{\bfseries\sffamily{\reflectbox{Z}}}}
\begin{document}

\begin{center}
{\Large\sc Four Cardinals and Their Relations in\/ {\sf ZF}}\\[1.8ex]
{\small Lorenz Halbeisen,  
Riccardo Plati, Salome Schumacher, and Saharon Shelah\footnote{Research 
partially supported by 
ISF 1838/19: The Israel Science Foundation (ISF) (2019-2023), 
and Rutgers 2018 DMS 1833363: NSF DMS Rutgers visitor program (PI S. Thomas) (2018-2021)\\
%
\indent Paper 1220 on Shelah's publication list.}}
\end{center}



\begin{quote}
{\small {\bf Abstract.} 
For a set $M$, $\fin(M)$ denotes the set of all finite subsets of $M$,
$M^2$ denotes the Cartesian product $M\times M$, 
$[M]^2$ denotes the set of all $2$-element subsets of $M$,
and $\iseq(M)$ denotes the set of all finite sequences
without repetition which can be formed with elements of $M$. 
Furthermore, for a set
$S$, let $\card{S}$ denote the cardinality of $S$. 
Under the assumption that the four cardinalities
$\card{[M]^2}$, $\card{M^2}$, $\card{\fin(M)}$, $\card{\iseq(M)}$
are pairwise distinct and pairwise comparable in~$\ZF$, 
there are six possible linear orderings between these four
cardinalities. We show that at least five of the
six possible linear orderings are consistent with~$\ZF$.
}
\end{quote}

\begin{quote}
\small{{\bf key-words\/}: permutation models, cardinals in $\ZF$, consistency results}\\
\small{\bf 2010 Mathematics Subject Classification\/}: {\bf 03E35}\ {03E10\ 03E25}
\end{quote}

\section{Introduction}

Let $M$ be a set. Then $\fin(M)$ denotes the set of all finite subsets of $M$,
$M^2$ denotes the Cartesian product $M\times M$, 
$[M]^2$ denotes the set of all $2$-element subsets of $M$,
$\iseq(M)$ denotes the set of all finite sequences without repetitions
which can be formed with elements of $M$, and $\seq(M)$
denotes the set of all finite sequences which can be formed with elements of $M$
(where repetitions are allowed).

Furthermore, for a set $A$, let $\card{A}$ denote the cardinality of $A$. 
We write $\card A=\card B$, if there exists a bijection between $A$ and $B$,
and we write $\card A\le\card B$, if there exists a bijection between $A$
and a subset $B'\subs B$ (\ie, $\card A\le\card B$ if and only if there 
exists an injection from $A$ into $B$). Finally, we write $\card A<\card B$
if $\card A\le\card B$ and $\card A\neq\card B$. By the 
{\sc Cantor-Bernstein Theorem},
which is provable in $\ZF$ only (\ie, without using the Axiom of Choice),
we get that $\card A\le\card B$ and $\card A\ge\card B$ implies 
$\card A=\card B$. 

Let $\fm:=|M|$, and let $[\fm]^2:=\card{[M]^2}$, $\fm^2:=\card{M^2}$, 
$\fin(\fm):=\card{\fin(M)}$, $\iseq(\fm):=\card{\iseq(M)}$,
and $\seq(\fm):=\card{\seq(M)}$. 
Concerning these cardinalities, in $\ZF$ we obviously have $\iseq(\fm)\le 
\seq(\fm)$, $[\fm]^2\le \fin(\fm)$ and $\fm^2\le\iseq(\fm)$, 
where the latter relations are visualized
by the following diagram (in the diagram, $\fn_1$ is below 
$\fn_2$ if $\fn_1\le\fn_2$):

{\small
\[
\xymatrix@R=10mm@C=3mm{
\fin(\fm)\ar@{-}[d]& &\iseq(\fm)\ar@{-}[d]\\ 
[\fm]^2& &\fm^2
}
\]

Moreover, for \emph{finite\/} cardinals $\fm$ with $\fm\ge\mathfrak{5}$ we have
$$[\fm]^2<\fm^2<\fin(\fm)<\iseq(\fm)\,,$$
and in the presence of the Axiom of Choice ({\ie}, in $\ZFC$),
for every infinite cardinal $\fm$ we have
$$[\fm]^2=\fm^2=\fin(\fm)=\iseq(\fm)\,.$$
It is natural to ask whether some of these equalities can be proved also
in $\ZF$, {\ie}, without the aid of $\AC$. Surprisingly, this is not the
case. In~\cite{carla}, a permutation model was constructed in which for
an infinite cardinal $\fm$ we have $\seq(\fm)<\fin(\fm)$
(see~\cite[Thm.\,2]{carla} or~\cite[Prp.\,7.17]{cst1st}). 
As a consequence we obtain that the existence
of an infinite cardinal $\fm$ such that $\iseq(\fm)<\fin(\fm)$ is 
consistent with $\ZF$. This consistency result was modified
to the existence of an infinite cardinal 
$\fm$ for which $\fm^2<[\fm]^2$ (see~\cite[Prp.\,7.18]{cst1st}), 
and later, it was strengthened to the existence of an infinite cardinal 
$\fm$ for which $\seq(\fm)<[\fm]^2$ (see~\cite{weird} or~\cite[Prp.\;8.28]{cst}).
The consistency of $\fin(\fm)<\iseq(\fm)$ for infinite cardinals $\fm$ can be obtained
with the {\it Ordered Mostowski Model\/} (see, for example, 
\cite[Related\;Result\,48,\;p.\,217]{cst}), in which there is an 
infinite cardinal $\fm$ with $$[\fm]^2<\fm^2<\fin(\fm)<\iseq(\fm)\,.$$
Consistency results as well as $\ZF$-results concerning the relations between these 
cardinals with other cardinals can be found, for example, 
in~\cite{Shen1,Shen2} or~\cite{berlin}.

Concerning the four cardinalities $[\fm]^2$, $\fm^2$, $\fin(\fm)$, and $\iseq(\fm)$, 
a question which arises naturally is whether for some infinite cardinal $\fm$, 
$\fin(\fm)<\fm^2$ is consistent with $\ZF$ (see \cite[Related\;Result\,20,\;p.\,133]{cst}).
Moreover, assuming that $\fm$ is infinite and the four cardinalities 
$[\fm]^2$, $\fm^2$, $\fin(\fm)$, $\iseq(\fm)$ are pairwise distinct and pairwise 
comparable in~$\ZF$, one may ask which linear orderings on these four cardinalities
are consistent with~$\ZF$.

Since for all cardinals $\fm$,
we cannot have $[\fm]^2>\fin(\fm)$ or $\fm^2>\iseq(\fm)$, there are only the following
six linear orderings on these four cardinalities which might be consistent with $\ZF$
(where for two cardinals $\fn_1$ and $\fn_2$, 
$\fn_1\longrightarrow\fn_2$ means $\fn_1<\fn_2$). 


{\ }\hfill\parbox{.3\textwidth}{
\[
\xymatrix@R=10mm@C=3mm{
\fin(\fm)\ar@{<-}[d]\ar@{->}[drr]& &\iseq(\fm)\ar@{<-}[d]\\ 
[\fm]^2& &\fm^2
}
\]
\centerline{\small{\it Diagram\;}$\NDiag$}
}
\hfill
\parbox{.3\textwidth}{
\[
\xymatrix@R=10mm@C=3mm{
\fin(\fm)\ar@{<-}[d]\ar@{->}[rr]& &\iseq(\fm)\\ 
[\fm]^2& &\fm^2\ar@{->}[ll]
}
\]
\centerline{\small{\it Diagram\;}$\CDiag$}
}
\hfill
\parbox{.3\textwidth}{
\[
\xymatrix@R=10mm@C=3mm{
\fin(\fm)\ar@{<-}[rr]& &\iseq(\fm)\\ 
[\fm]^2\ar@{->}[urr]& &\fm^2\ar@{->}[ll]
}
\]
\centerline{\small{\it Diagram\;}$\ZDiag$}
}
\hfill{\ }


{\ }\hfill\parbox{.3\textwidth}{
\[
\xymatrix@R=10mm@C=3mm{
\fin(\fm)\ar@{<-}[d]& &\iseq(\fm)\ar@{<-}[d]\ar@{->}[dll]\\ 
[\fm]^2& &\fm^2
}
\]
\centerline{\small{\it Diagram\;}$\NrevD$}
}
\hfill
{\ }\hfill\parbox{.3\textwidth}{
\[
\xymatrix@R=10mm@C=3mm{
\fin(\fm)\ar@{<-}[rr]& &\iseq(\fm)\ar@{<-}[d]\\ 
[\fm]^2& &\fm^2\ar@{<-}[ll]
}
\]
\centerline{\small{\it Diagram\;}$\CrevD$}
}
\hfill
{\ }\hfill\parbox{.3\textwidth}{
\[
\xymatrix@R=10mm@C=3mm{
\fin(\fm)\ar@{->}[rr]& &\iseq(\fm)\\ 
[\fm]^2& &\fm^2\ar@{<-}[ll]\ar@{->}[ull]
}
\]
\centerline{\small{\it Diagram\;}$\ZrevD$}
}
\hfill{\ }

Below we show that each of the five diagrams $\NDiag$, $\ZDiag$, $\NrevD$, $\CrevD$, 
$\ZrevD$ is consistent with $\ZF$. 
\section{Permutation Models}

In order to show, for example, that for some infinite cardinals $\fm$ and $\fn$, 
\hbox{$\fm<\fn$} is consistent with $\ZF$, 
by the {\sc Jech-Sochor Embedding Theorem} 
(see, for example, \cite[Thm.\,6.1]{Jechchoice} or~\cite[Thm.\,17.2]{cst}), it is
enough to construct a permutation model in which this statement holds.
The underlying idea of permutation models, which will be models of 
set theory with atoms ($\ZFA$),
is the fact that a model $\modcV\models\ZFA$ does not distinguish between the
atoms, where atoms are objects which do not have
any elements but which are distinct from the empty set. 
The theory $\ZFA$ is essentially the same as that of $\ZF$
(except for the definition of ordinals, where we have to require that
an ordinal does not have atoms among its elements). Let $A$ be a set.
Then by transfinite recursion on the ordinals $\alpha\in\On$ we can define
the $\alpha$-power ${\pwr}^\alpha(A)$ of~$A$ and
${\pwr}^\infty(A)=\bigcup_{\alpha\in\On}{\pwr}^\alpha(A)$. Like
for the cumulative hierarchy of sets in $\ZF$, one can show that if
$\modcM$ is a model of $\ZFA$ and $A$ is the set of atoms of
$\modcM$, then $\modcM={\pwr}^\infty(A)$. The
class $M_0:={\pwr}^\infty(\emptyset)$ is a model of $\ZF$
and is called the {\bf kernel}. Notice that all ordinals belong to the kernel.
By construction we obtain that every permutation of the set of atoms induces an
automorphism of $\modcM$, where the sets in the kernel are fixed.

Permutation models were first
introduced by Adolf~Fraenkel and, in a precise version (with
supports), by Andrzej~Mostowski. The version with filters, which we will follow
below, is due to Ernst~Specker (a detailed introduction to permutation models 
can be found, for example, in~\cite[Ch.\,8]{cst} or~\cite{Jechchoice}).

In order to construct a permutation model, we usually start with 
a set of atoms $A$ and then define a group $\nG$ of permutations or 
automorphisms of $A$. 

The permutation models we construct below are of the following simple
type: For each finite set $E\in\fin(A)$, let $$\Fix_{\nG}(E):=\blcb\pi\in\nG:
\forall a\in E\,(\pi a=a)\brcb\,,$$ and let $\nF$ be the 
filter of subgroups of $\nG$ generated by the subgroups 
$\{\Fix_{\nG}(E): E\in\fin(A)\}$. In other words, $\nF$ is the set
of all subgroups $H\le\nG$, such that there exists a finite set
$E\in\fin(A)$, such that $\Fix_G(E)\le H$.

For a set $x$, let 
$$\Sym_{\nG}(x):=\{\pi \in\nG :\pi x\,=\,x\}$$ 
where $$\pi x=\begin{cases}
\ \emptyset &\text{if $x=\emptyset$,}\\[1ex]
\ \pi a&\text{if $x=a$ for some $a\in A$,}\\[1ex]
\blcb\pi y:y\in x\brcb&\text{otherwise.}
\end{cases}$$
Then, a set $x$ is {\it symmetric\/} if and only if
there exists a set of atoms $E_{x}\in\fin(A)$, such that
$$\Fix_{\nG}(E_x)\le\Sym_{\nG}(x).$$ We say that $E_x$ is a {\it
support\/} of $x$. Finally, let $\modcV$ be the class 
of all hereditarily symmetric objects; then
$\modcV$ is a transitive model of $\ZFA$. 
We call $\modcV$ a {\it permutation model}.
So, a set $x$ belongs to the permutation model
$\modcV$ (with respect to $\nG$ and $\nF$), 
if and only if $x\subseteq\modcV$ and $x$ has a finite support $E_x\in\fin(A)$.
Because every $a\in A$ is symmetric, we get that each atom $a\in A$
belongs to $\modcV$.
\subsection{A Model for Diagram \protect\NDiag}

We first show that in every model for Diagram\;$\NDiag$, we have that 
the cardinality $\fm$ is transfinite.

\begin{lem}\label{lem:N}
If\/ $\fin(\fm)\le\fm^2$ for some\/ $\fm\ge\ffive$, then\/ $\aleph_0\le\fm$.
\end{lem}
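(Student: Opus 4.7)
The plan is to build an injective sequence $(x_n)_{n\in\N}$ in $M$ directly, combining a witness $\iota\colon\{0,1,2,3,4\}\to M$ of $\fm\ge\ffive$ with the assumed injection $f\colon\fin(M)\to M^2$. I set $x_i:=\iota(i)$ for $i<5$, and suppose inductively that pairwise distinct $x_0,\ldots,x_{n-1}$ have been chosen for some $n\ge 5$. Let $S_n:=\{x_0,\ldots,x_{n-1}\}$; the enumeration given by the sequence itself equips $\pwr(S_n)$ with a canonical lexicographic well-order via characteristic functions in $\{0,1\}^n$.

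The crucial step is a pigeonhole argument. For $n\ge 5$ we have $|\pwr(S_n)|=2^n>n^2=|S_n\times S_n|$, so since $f$ is injective on $\fin(M)\supseteq\pwr(S_n)$, at least one $F\in\pwr(S_n)$ must satisfy $f(F)\notin S_n\times S_n$. Take $F_n$ to be the lex-least such $F$, write $f(F_n)=(a,b)$, and set $x_n:=a$ if $a\notin S_n$; otherwise set $x_n:=b$ (which is then automatically outside $S_n$). This canonically produces $x_n\in M\setminus S_n$, the sequence extends to length $n+1$, and in the limit $n\mapsto x_n$ is the desired injection $\N\to M$, witnessing $\aleph_0\le\fm$.

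The only arithmetic input is the elementary inequality $2^n>n^2$ for $n\ge 5$, verified by checking $32>25$ and a one-line induction. This is also precisely why the hypothesis $\fm\ge\ffive$ is needed: for $n\in\{2,3,4\}$ one has $2^n\le n^2$, so $f$ could in principle embed $\pwr(S_n)$ bijectively into $S_n\times S_n$ and the pigeonhole step would collapse. The sharpness of the threshold is reflected in the finite cardinals $\fm\in\{2,3,4\}$, which do satisfy $\fin(\fm)\le\fm^2$ but not $\aleph_0\le\fm$. The only subtlety I foresee is ensuring that every selection is canonical in $\ZF$ (without any appeal to $\AC$), which is handled by using the inductively built enumeration of $S_n$ to define the lex order on $\pwr(S_n)$ and by the fixed ``$a$ before $b$'' tie-break for the coordinate choice.
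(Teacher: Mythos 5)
Your proof is correct and follows essentially the same route as the paper's: the same pigeonhole bound $2^n>n^2$ for $n\ge 5$ applied to the injection on $\pwr(S_n)$, the same canonical choice of the least witnessing finite set with respect to an order induced by the already-constructed enumeration, and the same first-coordinate tie-break, all of which keeps the recursion choice-free. The only difference is cosmetic (you grow the whole power set $\pwr(S_n)$ at every stage and state the general inequality, while the paper phrases the step verbally), so nothing further is needed.
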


\begin{proof} Let $A$ be a set of cardinality $\fm\ge\ffive$ and assume that
$h:\fin(A)\to A^2$ is an injection. First we choose a
$5$-sequence $S_5:=\la a_1,\ldots,a_5\ra$ of pairwise distinct elements of~$A$. 
The ordering of $S_5$ induces an ordering on $P_5:=\fin(\{a_1,\ldots,a_5\})$, and since
$|h[P_5]|=2^5$ and $2^5>5^2$, there exists a first set $u\in P_5$ such that for 
$\pair xy=h(u)$, the set $D_6:=\{x,y\}\setminus \{a_1,\ldots,a_5\}$ is non-empty. 
If $x\in D_6$, let $a_6:=x$, otherwise, let $a_6:=y$. Now, let $S_6:=\la a_1,\ldots,a_6\ra$
and $P_6:=\fin(\{a_1,\ldots,a_6\})$. As above, we find a $u\in P_6$ such that for 
$\pair xy=h(u)$, the set $D_7:=\{x,y\}\setminus \{a_1,\ldots,a_6\}$ is non-empty.
If $x\in D_7$, let $a_7:=x$, otherwise, let $a_7:=y$. Proceeding this way, we 
finally have an injection from $\omega$ into $A$, which shows that $\aleph_0\le\fm$.
\end{proof}

\begin{prp}\label{prp:fin-to-one}
If\/ $\aleph_0\le\fm$ for some cardinal\/ $\fm=\card A$, then there exists a finite-to-one function\/ $g:\seq(A)
\to\fin(A)$.
\end{prp}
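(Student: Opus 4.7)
Fix an injection $\iota\colon\omega\to A$ witnessing $\aleph_0\le\fm$ and write $a_n:=\iota(n)$. The idea is to encode the length of a finite sequence by tagging its range with a finite initial segment of $\{a_n:n\in\omega\}$. Concretely, for $s=\langle x_0,\dots,x_{n-1}\rangle\in\seq(A)$ of length $n$, put
$$g(s)\;:=\;\{x_0,\dots,x_{n-1}\}\cup\{a_0,a_1,\dots,a_n\}\;\in\;\fin(A),$$
so that the empty sequence is sent to $\{a_0\}$.

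To verify that $g$ is finite-to-one, fix $T\in\fin(A)$ and bound $|g^{-1}(T)|$. From $\{a_0,\dots,a_n\}\subseteq T$ the length $n$ of any preimage must satisfy $0\le n\le K$, where $K$ is the largest integer with $a_0,\dots,a_K\in T$ (and $g^{-1}(T)=\emptyset$ if $a_0\notin T$). For each such admissible $n$, the equality $\rg{s}\cup\{a_0,\dots,a_n\}=T$ combined with $\rg{s}\subseteq T$ forces $\rg{s}=(T\setminus\{a_0,\dots,a_n\})\cup Y$ for some $Y\subseteq\{a_0,\dots,a_n\}$: at most $2^{n+1}$ possibilities. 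Since at most $n^{n}$ sequences of length $n$ have range contained in a fixed set of size $\le n$, we conclude $|g^{-1}(T)|\le\sum_{n=0}^K 2^{n+1}\cdot n^{n}$, a finite number.

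\textbf{Main obstacle.} The one design worry is that the ``tag block'' $\{a_0,\dots,a_n\}$ can overlap $\rg{s}$, so $T$ does not literally decompose as a disjoint ``range part'' plus ``tag part''. This is precisely why the construction only yields finite-to-one rather than one-to-one; the overlap is harmless because the tag length $n$ is recovered up to the $K+1$ choices above, and any overlap between tag and range is absorbed into the $2^{n+1}$ count for $\rg{s}$. No assumption beyond the bare existence of a copy of $\omega$ inside $A$ is used.
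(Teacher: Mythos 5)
Your construction is correct, and it is a genuine (if close) variant of the paper's. Both proofs run on the same engine: use the copy of $\omega$ inside $A$ to tag $\ran(s)$ with enough extra data that only finitely many sequences can produce a given finite set. The difference is in what gets encoded and how collisions are handled. The paper first replaces $A$ by $A\setminus C$ via the bijection $\iota$ (where $C=\{x_{2i}:i\in\omega\}$), thereby reserving the set $C$ exclusively for tags; it then attaches a \emph{single} reserved atom $x_{2i_s}$ whose index codes the entire sequence $s$ relative to the enumeration of $\ran(s)$ induced by $s$, so a set $T$ has at most $|{\ran(s)}|!$ preimages (one per enumeration of the range). You instead encode only the \emph{length} of $s$, by adjoining the initial segment $\{a_0,\dots,a_n\}$, and you tolerate overlap between the tag block and $\ran(s)$; finiteness of $g^{-1}(T)$ then follows because the admissible lengths are bounded by the longest run $a_0,\dots,a_K$ inside $T$ and, for each such length, only finitely many sequences have all entries in the finite set $T$. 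Your route avoids the disjointification step entirely (no bijection $A\to A\setminus C$ is needed), at the cost of a cruder preimage bound; the paper's route gives a cleaner decomposition of $T$ into ``range part'' and ``tag part'' but needs the extra preparatory move. Both are complete ZF-proofs of the stated finite-to-one claim. One cosmetic remark: your bound $\sum_{n=0}^{K}2^{n+1}n^{n}$ is valid but could be replaced by the simpler $\sum_{n=0}^{K}|T|^{n}$, since any preimage of length $n$ is just a length-$n$ sequence with entries in $T$.
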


\begin{proof}
By the assumption, there exists an injection $h:\omega\to A$,
and for each $i\in\omega$, let $x_i:=h(i)$, let $B=\{x_{i}:i\in\omega\}$, and let
$C:=\{x_{2i}:i\in\omega\}$. Notice that 
$$\iota(a)=
\begin{cases}
a &\text{if $a\in A\setminus B$,}\\
x_{2i+1} &\text{if $a=x_i$,}
\end{cases}$$
is a bijection between $A$ and $A\setminus C$. Thus,
it is enough to construct a finite-to-one function $g:\seq(A\setminus C)
\to\fin(A)$. Let $s=\la a_0,\ldots,a_{n-1}\ra\in \seq(A\setminus C)$ and let
$\ran(s):=\{a_0,\ldots,a_{n-1}\}$. The sequence $s$ gives us in a natural way
an enumeration of $\ran(s)$, and with respect to this enumeration we can encode the 
sequence $s$ by a natural number $i_s\in\omega$. Now, let
$g(s):=\ran(s)\cup\{x_{2i_s}\}$. Then, since there are just finitely many 
enumerations of $\ran(s)$, $g$ is a finite-to-one function.
\end{proof}

The following result is just a consequence of {\PRP}\;\ref{prp:fin-to-one} 
and {\LEM}\;\ref{lem:N}.

\begin{cor}
If\/ $\fin(\fm)\le\fm^2$ for some\/ $\fm=\card A\ge\ffive$, then there 
exists a finite-to-one function\/ $g:\seq(A)\to\fin(A)$.
\end{cor}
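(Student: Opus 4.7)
The plan is essentially to chain the two preceding results, since the corollary follows immediately from them. First, I would apply {\LEM}\;\ref{lem:N} to the hypothesis $\fin(\fm)\le\fm^2$ with $\fm\ge\ffive$ to conclude that $\aleph_0\le\fm$. This gives us the countable subset of $A$ needed to invoke the next result.

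Then, with $\aleph_0\le\fm=\card A$ in hand, I would apply {\PRP}\;\ref{prp:fin-to-one} directly to obtain the desired finite-to-one function $g:\seq(A)\to\fin(A)$.

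Since both ingredients are already proved and the implication is just a composition, there is no genuine obstacle here: the corollary is a one-line consequence. The only thing worth noting is that the hypothesis $\fm\ge\ffive$ is exactly what the lemma requires, and no further arithmetic or construction is needed beyond citing the two earlier statements.
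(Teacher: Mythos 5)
Your proposal is correct and matches the paper exactly: the authors state that the corollary ``is just a consequence of {\PRP}\;\ref{prp:fin-to-one} and {\LEM}\;\ref{lem:N}'', i.e.\ precisely the two-step chaining you describe. Nothing further is needed.
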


We now introduce the technique we intend to use in order to build a permutation 
model from which it will follow that for some infinite cardinal $\fm$,
the relation $\fin(\fm)<\fm^2$ is consistent with $\ZF$. Notice that this 
relation is the main feature of Diagram~$\NDiag$ and that this relation implies
that $\aleph_0\le\fm$.
In the next section, we shall use a similar permutation model in order to show 
the consistency of Diagram~$\ZDiag$ with $\ZF$. 

Let $K$ be the class of all the pairs $(A,h)$ such that $A$ is a (possibly empty) set
and $h$ is an injection $h\colon\fin(A)\setminus\{\emptyset\}\to A^2$. We will also refer to the elements 
of $K$ as models. We define a partial ordering $\leq$ on $K$ by stipulating
$$(A,h)\leq (B,f)\iff
A\subs B\;\wedge\;h\subs f\;\wedge\; 
\ran\left(f|_{\fin(B)\setminus\fin(A)}\right)\subs B^2\setminus A^2\,.$$ 
When the functions involved are clear from the context, with a slight abuse 
of notation we will just write $A\leq B$ instead of $(A,h)\leq(B,f)$ 
and $A\in K$ instead of $(A,h)\in K$.

Before proceeding, we give two preliminary definitions. Given a model $(M,f)$ 
and a countable subset $A\subs M$, 
we define the \textit{closure\/} $\textrm{cl}(A,M)$ as 
the smallest superset of $A$ that is closed under $f$ and pre-images 
with respect to the same function. Constructively, we can characterize 
$\textrm{cl}(A,M)$ as a countable union as follows: 
Define $\textrm{cl}_0=\textrm{cl}_0(A,M):=A$ and, for all $i\in\omega$,
    $$\textrm{cl}_{i+1}= \textrm{ cl}_i \;\cup 
    \underset{\parbox{9ex}{\scriptsize $p\in\textrm{fin(cl}_i)$\\
    $p\neq\emptyset$}}{\hspace*{-3ex}\bigcup}\hspace*{-3ex}\ran(f(p))\; 
    \cup \bigcup_{q\in(\textrm{cl}_i)^2\cap\ran(f)} f^{-1}(q)$$
in order to finally define 
$\textrm{cl}(A,M):=\bigcup_{i\in\omega}\textrm{cl}_i$. 
Furthermore, we set a standardized way 
to extend a \textit{partial} model $(A,f')$, where $f'$ is only a 
partial function, to an element of $K$: Consider $(A,f')$, 
where $A$ is a set and $f'$ is an injection with $\dom(f')\subseteq \textrm{fin}(A)\setminus\{\emptyset\}$ and $\ran(f')\subseteq A^2$. Let $(M_0,f'_0)=(A,f')$ and, for $j\in\omega$, define inductively   
$(M_{j+1},f'_{j+1})$ as follows: $M_{j+1}$ is the fully 
disjoint union $$M_j\quad\sqcup
\underset{\parbox{17ex}{\scriptsize $
P\in \textrm{fin}(M_j)\setminus\dom(f'_j)$\\
$P\neq\emptyset$}}
{\hspace*{-8ex}\bigsqcup}
\hspace{-7ex}\{a_P,b_P\}.$$ 
For what concerns the injection $f'_{j+1}$, we naturally 
require the inclusion $f'_j\subseteq f'_{j+1}$, 
as well as the equality $\dom(f'_{j+1})=\textrm{fin}(M_j)\setminus\{\emptyset\}$, where for
$P\in \textrm{fin}(M_j)\setminus\dom (f'_j)$ with $P\neq\emptyset$, we define 
$f'_{j+1}(P):=(a_P,b_P)$.
We are now in the position of defining the 
{\it plain extension\/} of $(A,f')$ as 
$$(M,f):=\left(\bigcup_{j\in\omega}M_j,\;\bigcup_{j\in\omega}f'_j\right).$$

Given the previous definitions, we remark that given a model $M\in K$ and a countable subset $A\subseteq M$, we have that $\textrm{cl}(A,M)\leq M$, which proves the following:

\begin{fct}\label{fct:existence}
For every countable subset $A$ of a model $M\in K$, there is a countable model $N$ such that $A\subseteq N\leq M$.
\end{fct}

\begin{prp}[CH]\label{prp:limit}
There is a model\/ $M_*$ of cardinality\/ $\fc$ in $K$ such that:
\begin{itemize}
    \item $M_*$ is\/ $\aleph_1$-universal, i.e., if\/ $N\in K$ is countable then\/ $N$ 
    is isomorphic to some\/ $N_*\leq M_*$.
    \item $M_*$ is\/ $\aleph_1$-homogeneous, i.e., if\/ $N_1,N_2\leq M_*$ are 
    countable and\/ $\pi\colon N_1\to N_2$ is an isomorphism then there exists 
    an automorphism\/ $\pi_*$ of $M_*$ such that\/ $\pi\subs\pi_*$.
    \item If\/ $N\leq M_*$ and\/ $A\subs M_*$ are countable, then there 
    is an automorphism\/ $\pi$ of\/ $M_*$ that fixes\/ $N$ pointwise, 
    such that\/ $\pi(A)\setminus N$ is disjoint from $A$. 
\end{itemize}
\end{prp}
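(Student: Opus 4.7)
The plan is to construct $M_*$ as a Fra\"iss\'e-type limit of the class $(K,\leq)$ under CH, via a back-and-forth construction of length $\omega_1$. The first two bullets will be the standard $\aleph_1$-universality and $\aleph_1$-homogeneity of the limit, and the third will be derived from them by amalgamating a countable $\leq$-extension of $N$ that contains $A$ with a fresh isomorphic copy of itself over $N$.

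The key preliminary step is to verify that $(K,\leq)$ has the \emph{amalgamation property}: given $(A,h_A)\leq(B,h_B)$ and $(A,h_A)\leq(C,h_C)$ with $B\cap C=A$, there is $(D,h_D)\in K$ with $(B,h_B)\leq(D,h_D)$ and $(C,h_C)\leq(D,h_D)$. To construct such a $D$, take a fresh countably infinite set $E$ disjoint from $B\cup C$, set $D=B\cup C\cup E$, and define $h_D=h_B\cup h_C\cup h^*$, where $h^*$ sends each finite subset of $D$ not already in $\fin(B)\cup\fin(C)$ injectively into $E^2$, avoiding previously used values. The defining condition of $\leq$ forces $h_B(\fin(B)\setminus\fin(A))\subs(B\setminus A)^2$ and $h_C(\fin(C)\setminus\fin(A))\subs(C\setminus A)^2$, which are disjoint because $B\cap C=A$; on $\fin(A)$ both $h_B$ and $h_C$ coincide with $h_A$; and $h^*$ takes fresh values in $E^2$. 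Hence $h_D$ is injective, and both required range conditions hold, since $E^2\subs(D\setminus B)^2\cap(D\setminus C)^2$ and $(C\setminus A)^2\subs(D\setminus B)^2$ (and symmetrically for $B$). It is likewise routine that $\leq$ is transitive and that $K$ is closed under unions of countable $\leq$-chains.

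Assuming CH, enumerate in a list of length $\omega_1$ the tasks: (i) every countable $(N,h_N)\in K$ must be $\leq$-embedded into $M_*$, and (ii) every $\leq$-isomorphism between countable $\leq$-submodels of the structure built so far must, in the limit, extend to an automorphism of $M_*$. Build an increasing $\leq$-chain $(M_\alpha)_{\alpha<\omega_1}$ of countable models in $K$, handling one task at each successor stage by amalgamation and taking unions at limits; set $M_*=\bigcup_{\alpha<\omega_1}M_\alpha$. Then $\card{M_*}=\aleph_1=\fc$, task (i) delivers $\aleph_1$-universality, and task (ii), organized as a back-and-forth, delivers $\aleph_1$-homogeneity.

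For the third bullet, given countable $N\leq M_*$ and countable $A\subs M_*$, pick $\alpha<\omega_1$ with $N\cup A\subs M_\alpha$ and set $B=M_\alpha$, so that $N\leq B\leq M_*$ and $B$ is countable. By amalgamation, form $(D,h_D)\in K$ containing $B$ and an isomorphic copy $B'$ of $B$ over $N$, with $B\cap B'=N$ and $\leq$-isomorphism $\sigma\colon B\to B'$ fixing $N$ pointwise. First embed $D$ into $M_*$ by universality, then apply homogeneity to pull the image of $B$ back to $B$, obtaining an embedding $\iota\colon D\to M_*$ with $\iota|_B=\mathrm{id}$. Applying homogeneity once more to $\iota\circ\sigma\colon B\to\iota(B')$, which fixes $N$ pointwise, yields an automorphism $\pi$ of $M_*$ extending it; then $\pi$ fixes $N$ and $\pi(A)\subs\iota(B')$, so $\pi(A)\cap A\subs\iota(B')\cap B=N$ (the last equality using $\iota|_B=\mathrm{id}$ and $B\cap B'=N$), i.e.\ $(\pi(A)\setminus N)\cap A=\emptyset$. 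The main obstacle will be the careful verification of amalgamation and chain-closure, since the range condition in the definition of $\leq$ is delicate; it works because $B\cap C=A$ forces the ``new'' values of $h_B$ and $h_C$ to live in the disjoint sets $(B\setminus A)^2$ and $(C\setminus A)^2$.
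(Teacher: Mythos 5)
Your proposal is correct and follows essentially the same route as the paper: a Fra\"iss\'e/J\'onsson-style construction of length $\omega_1$ under CH based on disjoint amalgamation over a common $\leq$-submodel, with universality and homogeneity obtained by bookkeeping plus a back-and-forth of length $\omega_1$, and the third property derived from homogeneity by amalgamating a countable model containing $A$ with a fresh isomorphic copy of itself over $N$. The only organizational difference is that you isolate and verify the amalgamation property explicitly, whereas the paper builds the required extensions directly into the tower (the sets $M_\alpha'$ and the amalgams $M_1*M_2$); this is not a substantive divergence.
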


\begin{proof}
We construct the model $M_*$ by induction on~$\omega_1$, where we assume that $\omega_1=\fc$.
Let $M_0=\emptyset$. When $M_\alpha$ is already defined for some $\alpha\in\omega_1$, we can define 
$$C_\alpha:=\blcb N\leq M_\alpha:N\in K\;\textrm{and $N$ is countable}\brcb.$$

The construction of $M_{\alpha+1}$, starting from $M_\alpha$, consists of a disjoint 
union of two differently built sets of models. First, for each element $N\in C_\alpha$, 
let $S_N$ be a system of representatives for the $\textit{strong}$ isomorphism classes 
of all the models $M\in K$ such that $N\leq M$ with $M$ countable.
Here, by $\textit{strong}$ we mean that, 
for two models $M_1$ and $M_2$ with $N\le M_1,M_2$,
it is not enough to be isomorphic in order to belong to 
the same class, but we require that there exists an isomorphism between $M_1$ and $M_2$ 
that fixes $N$ pointwise, which we can express by saying that \textit{$M_1$ is isomorphic to $M_2$ 
over $N$}. We first extend $M_\alpha$ by the set
$$M_\alpha'=\bigsqcup_{N\in C_\alpha}\bigsqcup_{M\in S_N}M\setminus N,$$
where {``}\,$\bigsqcup$\,{''} indicates that we have a \textit{disjoint union}, and now we define $M_{\alpha+1}$ as the plain extension of $M_\alpha\sqcup M_\alpha'$. Finally, for non-empty limit ordinals $\delta$ define $M_\delta=\cup_{\alpha\in\delta}M_\alpha$, and let $$M_*=\bigcup_{\alpha\in\omega_1}M_\alpha.$$
It remains to show that the model $M_*$ has the required properties:
First we notice that $M_*$ has cardinality $|M_*|=\mathfrak{c}$, as required,
and since, by construction, $M_1$ is $\aleph_1$-universal,
$M_*$ is also $\aleph_1$-universal. In order to show that $M_*$ 
is $\aleph_1$-homogeneous, we make use of a back-and-forth argument. Let $N_1,N_2\leq M_*$ be countable models and $\pi\colon N_1\to N_2$ an isomorphism. Let $\{x_\alpha:\alpha\in\omega_1\}$ be 
an enumeration of the elements of $M_*$ and let $I_0:=N_1$. If $x_{\delta_1}$ 
is the first element (with respect to 
this enumeration) in $M_*\setminus I_0$, 
then, by {\FCT}\;\ref{fct:existence}, there exists a countable model 
$I'_1\leq M_*$ such that $I_0\leq I'_1$ and $x_{\delta_1}\in I'_1$. Similarly, there is a countable model $J'_1$ with $N_2\leq J'_1\leq M_*$ such that 
there exists an isomorphism $\pi'_1\colon I'_1\to J'_1$ with $\pi\subseteq\pi'_1$. Now, let $x_{\gamma_1}$ be the first element in $M_*\setminus J'_1$: for the same reason as above we can find countable models $J_1, I_1$ such that $I'_1\leq I_1\leq M_*$ and $J'_1\leq J_1\leq M_*$, together with $x_{\gamma_1}\in J_1$ and the fact that there exists an isomorphism $\pi_1\colon I_1\to J_1$ with $\pi'_1\subseteq\pi_1$.
Proceed inductively with $x_{\delta_{\alpha+1}}$ being the first element in 
$M_*\setminus I_\alpha$ and find countable models $I_\alpha\leq I'_{\alpha+1}\leq M_*$, 
$J_\alpha\leq J'_{\alpha+1}\leq M_*$ and an isomorphism 
$\pi'_{\alpha+1}\colon I'_{\alpha+1}\to J'_{\alpha+1}$ with $x_{\delta_{\alpha+1}}\in I'_{\alpha+1}$ and $\pi_\alpha\subseteq\pi'_{\alpha+1}$. As in the second part of the base step, let $x_{\gamma_{\alpha+1}}$ be the first element in 
$M_*\setminus J'_{\alpha+1}$ and find countable models $I'_{\alpha+1}\leq I_{\alpha+1}\leq M_*$, 
$J'_{\alpha+1}\leq J_{\alpha+1}\leq M_*$, with an isomorphism 
$\pi_{\alpha+1}\colon I_{\alpha+1}\to J_{\alpha+1}$ such that $x_{\gamma_{\alpha+1}}\in J_{\alpha+1}$ and $\pi'_{\alpha+1}\subseteq\pi_{\alpha+1}$. We naturally take the union at limit stages and finally obtain
$\pi_*=\cup_{\alpha\in\omega_1}\pi_\alpha$, which is the required automorphism of $M_*$.

To show the last property of the theorem, let $N\leq M_*$ and $A\subs M_*$ 
be both countable. Since the cofinality of $\omega_1$ is greater than $\omega$, 
we can find by construction both a countable model $M$ satisfying the properties 
$A\subseteq M$, $N\leq M\leq M_*$ and a further countable model $M'$ with $N\leq M'\leq M_*$ 
such that $M'\cap(A\setminus N)=\emptyset$, and such that there exists an 
isomorphism $i\colon M\to M'$ with $i$ fixing $N$ pointwise. Now, by $M_*$ 
being $\aleph_1$-homogeneous we obtain an automorphism $i_*$ extending $i$, 
as required.
\end{proof}

As anticipated, the construction of the previous theorem does not exploit any 
particular property of the functions $h\colon\fin(A)\setminus\{\emptyset\}\to A^2$. In fact, 
the construction is an analogue of a Fra\"{\i}ss\'e limit as it relies 
on similar properties, like, for example, a modified version of 
the {\sl Disjoint Amalgamation Property\/} (DAP) of~$K$, where we require that embeddings 
between structures $f\colon(A,h)\to(B,g)$ are allowed only when, according to our 
previous definition, $A\leq B$. Indeed, exactly the same construction can be carried 
out in the alternative framework of models $(A,f,g,h)$, where $A$ is a set and we have 
three injections $f\colon A^2\to[A]^2$, $g\colon[A]^2\to\iseq(A)$ and 
$h\colon\iseq(A)\to\fin(A)$, which will be used below 
to show the consistency of Diagram~$\ZDiag$ with~$\ZF$.

Given {\PRP}\;\ref{prp:limit}, we consider the permutation 
model $\modcV_\NDiag$ that arises 
naturally by considering the elements of the $\aleph_1$-universal 
and $\aleph_1$-homogeneous model $M_*$ 
as the set of atoms and its automorphisms $\Aut(M_*)$ as the 
group $G$ of permutations. In particular,
each permutation in $G$ preserves the injection 
$h\colon\fin(M_*)\setminus\{\emptyset\}\to M_*^2$ 
that the model $(M_*,h)$ comes with.

We are now ready to prove the following result.

\begin{thm}\label{thm:model_N}
Let\/ $M_*$ be the set of
atoms of\/ $\modcV_\NDiag$ and let\/ $\fm=|M_*|$. Then
$$\modcV_\NDiag\models [\fm]^2<\fin(\fm)<\fm^2<\iseq(\fm)\,.$$ 
\end{thm}

\begin{proof}
The existence of an injection $i\colon\fin(M_*)\to M_*^2$ in $\modcV_\NDiag$ follows from the existence of the injection $h\colon\fin(M_*)\setminus\{\emptyset\}\to M_*^2$ given by the specific permutation model, together with the fact that $h$ cannot be surjective, which will be shown later. So, we only need to 
prove that in $\modcV_\NDiag$, there is no reverse injection from $M_*^2$ into $\fin (M_*)$, 
and that there are no injections from $\fin(M_*)$ into $[M_*]^2$ 
or from $\iseq(M_*)$ into $M_*^2$.

In order to show that there is neither an injection from $M_*^2$ into 
$\fin (M_*)$, nor an injection from $\iseq(M_*)$ into $M_*^2$,
assume towards a contradiction that $\modcV_\NDiag$ contains an injection
$f_1\colon M_*^2\to\fin (M_*)$ or an injection $f_2\colon\iseq(M_*)\to M_*^2$. Let $S$ 
be a finite support of both functions~$f_1$ and $f_2$ (if they exist). In other words, 
$S\in\fin (M_*)$ and for each automorphism $\pi\in\Fix_{G}(S)$ 
we have $\pi(f_1)=f_1$ and  $\pi(f_2)=f_2$, respectively. 
Let $N_1$ be a countable model in $K$ with $S\subs N_1\leq M_*$. 
Let $(N_2,g)$ be a countable model in $K$ 
such that $(N_1,h|_{N_1})\le (N_2,g)$,
constructed as follows: The domain of $N_2$ is 
the disjoint union $$N_2=N_1\sqcup\{x,y,z\}\sqcup\{a_i:i\in\omega\}\,.$$
Furthermore, we define the injection $g\colon\fin(N_2)\setminus\{\emptyset\}\to N_2^2$ such that $g\supseteq h|_{N_1}$
and for $E\in\fin (N_2)\setminus\fin (N_1)$ we define $g(E)=\la e_1,e_2\ra$ such
that $g$ is injective and satisfies the following conditions (recall that since $N_2$ is 
countable, also $\fin(N_2)$ is countable):
\begin{itemize}
    \item If $E\cap\{x,y,z\}=\emptyset$ then $\la e_1,e_2\ra=\la a_n,a_m\ra$ for some 
    $n,m\in\omega$.
    \item If $|E\cap\{x,y,z\}|=1$, then $\la e_1,e_2\ra =\la u,a_k\ra$ 
    for some $k\in\omega$, where $u$ is the unique element in $E\cap\{x,y,z\}$. 
    \item If $|E\cap\{x,y,z\}|=2$, then $\la e_1,e_2\ra=\la v,a_k\ra$ for some $k\in\omega$, 
    where $v$ is the unique element in $\{x,y,z\}\setminus(E\cap\{x,y,z\})$.
    \item If $|E\cap\{x,y,z\}|=3$ then $\la e_1,e_2\ra=\la a_n,a_m\ra$ for some $n,m\in\omega$. 
\end{itemize}
Notice that there are automorphisms of $(N_2,g)$ that just permute ${x,y,z}$ and
fix all other elements of $N_2$ pointwise.
By construction of $M_*$, we find a model $N_2'\in K$ such that 
$N_1\leq N_2'\leq M_*$ and $N_2'$ is isomorphic to $N_2$ over~$N_1$.
For this reason we can refer to $N_2$ as a legit submodel of $M_*$ that extends $N_1$ 
in the way we described. Let us now consider $f_1(\la x,y\ra)$, where we assumed in $\modcV_\NDiag$
the existence of an injection $f_1\colon M_*^2\to\fin (M_*)$ with finite support~$S$. 
If $f_1(\la x,y\ra)\subseteq N_1$ or $f_1(\la x,y\ra)\nsubseteq N_2$, then we can
apply the third property of {\PRP}\;\ref{prp:limit} with respect to $f_1(\la x,y\ra)$ 
and $N_1$ and~$N_2$ respectively, which gives us a contradiction.
If $\{x,y\}\subs f_1(\la x,y\ra)$ or $\{x,y\}\cap f_1(\la x,y\ra)=\emptyset$,
we could swap $x$ and $y$ while fixing every other element of $N_2$ 
pointwise and get $f_1(\la x,y\ra)=f_1(\la y,x\ra)$, which would imply that
$f_1$ is not injective. So, assume that
$|\{x,y\}\cap f_1(\la x,y\ra)|=1$ and without loss
of generality assume that $\{x,y\}\cap f_1(\la x,y\ra)=\{x\}$. 
Now, if $z\in f_1(\la x,y\ra)$, {\ie}, $\{x,z\}\subs f_1(\la x,y\ra)$, 
we similarly obtain a contradiction by swapping $z$ and $x$, 
while if $z\notin f_1(\la x,y\ra)$ we get a contradiction by swapping $z$ and $y$. 
This shows that $f_1$ cannot belong to $\modcV_\NDiag$.

For what concerns $f_2$, let us consider the set $\nS$ consisting of sequences
without repetition of $\{x,y,z\}$ of length~$2$ or~$3$.
Notice that $|\nS|=12$. Now, for each element $s\in\nS$, if $f_2(s)=\la a,b\ra$, then $a$ and
$b$ are such that $a\neq b$ and $\la a,b\ra\in\{x,y,z\}^2$\,---\,notice that otherwise, for example, 
if $\{a,b\}\cap\{x,y\}=\emptyset$, then we can swap $x$ and $y$ and hence move $s$ 
without moving $\la a,b\ra$, which is not consistent with $S$ being a support of $f_2$. 
We get the conclusion by noticing that, because of this restriction, 
there are only six possible images of elements of $\nS$, which implies 
that $f_2$ cannot be an injection.

It remains to show that in $\modcV_\NDiag$ there are no injections 
from $\fin(M_*)$ into $[M_*]^2$. For this, assume towards a contradiction 
that there exists such a function $f_3$
in $\modcV_\NDiag$ and assume that $S$ is a finite support of~$f_3$. Then, 
let $N_1$ be a countable model in $K$ with 
$S\subs N_1\leq M_*$. We will construct a countable model $(N_2,g)\in K$ 
satisfying $(N_1,h|_{N_1})\leq (N_2,g)\leq M_*$ with a finite subset 
$u\in\fin(N_2\setminus N_1)$ such that, for all $\la x,y\ra\in N^2_2\setminus N^2_1$, 
one of the following holds:
\begin{itemize}
\item there is no finite set $E\in\fin(N_2)\setminus\{\emptyset\}$ with $h(E)=\la x,y\ra$;
\item there exists an automorphism $\pi$ of $N_2$ over $N_1$ with $\pi(u)\neq u$ 
and $\pi\{x,y\}=\{x,y\}$.
\end{itemize}
Let $u=\{a_0,b_0,c_0\}$ be disjoint from $N_1$ and define $G^1_0=N_1\sqcup\{a_0,b_0,c_0\}$. 
Now, for each finite set $E\in\fin(G^1_0)\setminus\{\emptyset\}$ which is not in the domain of 
$h_0=h|_{N_1}$, that is, for each finite set $E\in\fin(G^1_0)$ with 
$E\cap\{a_0,b_0,c_0\}\neq\emptyset$, let $\{x_E,y_E\}$ be a pair of new elements  
and define $$G^*_0\;:=\;G^1_0\;\sqcup\hspace{-2ex}\bigsqcup_{E\in\fin(G^1_0)\setminus\dom(h_0),\,E\neq\emptyset}
\hspace{-2ex}\{x_E,y_E\}\qquad\text{and}\qquad
h^1_0:=h_0\;\cup\hspace{-2ex}\bigcup_{E\in\fin(G^1_0)\setminus
\dom(h_0),\,E\neq\emptyset}\hspace{-2ex}\blcb\big{\la} E,\la x_E,y_E\ra\big{\ra}\brcb.$$ 
Let now $G^2_0$ be an extension of $G^*_0$ 
by adding a copy of $G^1_0\setminus N_1$, where the ``copy function'' 
is denoted by $\tau_0$. Notice that at this stage, $G^1_0\setminus N_1=\{a_0,b_0,c_0\}$.
More formally, $G^2_0=G^*_0\sqcup\{\tau_0(a):a\in 
G^1_0\setminus N_1\}$, together with an extension of $h^1_0$ defined as 
$$h^2_0:=h^1_0\;\cup\hspace{-2ex}\bigcup_{E\in\fin(G^1_0)\setminus\dom(h_0),\,E\neq\emptyset}
\hspace{-2ex}\blcb\big{\la} \tau_0(E),\la y_E,x_E\ra\big{\ra}\brcb,$$ 
where, given $E\in\fin(G^1_0)\setminus\dom(h_0)$ with $E\neq\emptyset$, $\tau_0(E)$ is defined 
as $$\tau_0(E):=(E\cap N_1)\sqcup\{\tau_0(a):a\in E\setminus N_1\}.$$ 
Notice that if $a\in G^1_0\setminus N_1$, then $\tau_0(a)\in G^2_0\setminus G^*_0$. 
The construction carried out so far is actually the first of countably many 
analogous extension steps we will consequently apply in order to consider 
the union of all the progressive extensions. 
That is, assume that for some 
$i\in\omega$ we have already defined $G^2_i$ and $h^2_i$. Define $G^1_{i+1}=G^2_i$ 
and, for each non-empty finite set $E\in\fin(G^1_{i+1})$ which is not in the domain of $h^2_i$, 
consider a pair of new elements $\{x_E,y_E\}$ and define
$$G^*_{i+1}:=G^1_{i+1}\;\sqcup\hspace{-2ex}\bigsqcup_{E\in\fin(G^1_{i+1})
\setminus\dom(h^2_i),\,E\neq\emptyset}\hspace{-2ex}\{x_E,y_E\}\qquad\text{and}$$ $$\qquad
h^1_{i+1}:=h^2_i\;\cup\hspace{-2ex}
\bigcup_{E\in\fin(G^1_{i+1})\setminus\dom(h^2_i),\,E\neq\emptyset}\hspace{-4ex}
\blcb\big{\la}E,\la x_E,y_E\ra\big{\ra}\brcb.$$
Let now $G^2_{i+1}$ be an extension of $G^*_{i+1}$ by adding a copy of 
$G^1_{i+1}\setminus N_1$, where the ``copy function'' is now $\tau_{i+1}$. 
More formally, $G^2_{i+1}:=G^*_{i+1}\;\sqcup\{\tau_{i+1}(a):a\in G^1_{i+1}\setminus N_1\}$, 
together with an extension of $h^1_{i+1}$ defined as 
$$h^2_{i+1}:=h^1_{i+1}\;\cup\hspace{-2ex}\bigcup_{E\in\fin(G^1_{i+1})
\setminus\dom(h^2_i),\,E\neq\emptyset}\hspace{-5ex}\blcb\big{\la}\tau_{i+1}(E),\la y_E,x_E\ra\big{\ra}\brcb\;\cup\hspace{-2ex}\bigcup_{E\in\dom(h^2_i)
\setminus\fin(N_1)}\hspace{-2ex}\blcb\big{\la}\tau_{i+1}(E),\tau_{i+1}(h^2_i(E))\big{\ra}\brcb,$$ 
where, again, given $E\in\fin(G^1_{i+1})\setminus\fin(N_1)$, $\tau_{i+1}(E)$ 
is defined as $$\tau_{i+1}(E):=(E\cap N_1)\sqcup\{\tau_{i+1}(a):
a\in E\setminus N_1\},$$ 
for which we newly remark that if 
$a\in G^1_{i+1}\setminus N_1$, then $\tau_{i+1}(a)\in G^2_{i+1}\setminus 
G^*_{i+1}$. 
Notice that 
every automorphism of $(G_{i+1}^1,h_i^2)$ can be extended to an automorphism of $(G_{i+1}^*,h_{i+1}^1)$\,---\,this is because
$G_{i+1}^*\setminus G_{i+1}^1$ consists of pairs $\{x,y\}$, 
each of which corresponds to a unique non-empty finite subset of $G_{i+1}^1$ and {\it moves\/} according to this finite subset. 
Finally, we conclude that every automorphism of 
$(G_{i+1}^*,h_{i+1}^1)$ can be extended to an automorphism 
of $(G_{i+1}^2,h_{i+1}^2)$, which follows from the following 
two facts: $G_{i+1}^2\setminus G_{i+1}^*$ consists of a copy through $\tau_{i+1}$ of $G_{i+1}^1\setminus N_1$, and therefore
supports the same automorphisms. Furthermore, every
automorphism of $(G_{i+1}^*,h_{i+1}^1)$ is an extension of some automorphism of $(G_{i+1}^1,h_i^2)$, which follows from the fact that for all $E\in\fin(G_{i+1}^*)\setminus\emptyset$, we have $E\in\dom(h_{i+1}^1)$ if and only if $E\cap(G_{i+1}^*\setminus G_{i+1}^1)=\emptyset$.

Now, let $$N_2:=\bigcup_{i\in\omega}
G^1_i\qquad\text{and}\qquad g:=\bigcup_{i\in\omega}h^1_i\,.$$
We claim that $(N_2,g)$ satisfies the required properties. 
Indeed, if $\la x,y\ra\in N^2_2\setminus N_1^2$ and there is some 
finite set $E\in\fin(N_2)\setminus\{\emptyset\}$ with $g(E)=h(E)=\la x,y\ra$, then by 
construction of $g$ we necessarily have $\la x,y\ra\in (N_2\setminus N_1)^2$ and the following fact: either there exists some index $n\in\omega$ such that $\la x,y\ra\in (G^*_n\setminus G^1_n)^2$, or there are indices $n,k\in\omega$ with $k>n$ such that for some ordered pair $\la x',y'\ra \in (G^*_n\setminus G^1_n)^2$ we have $\la x,y\ra = \bigl\la\tau_k(x'),\tau_k(y')\bigr\ra$. Each of the two conditions implies that there 
exists an automorphism $\pi$ of $N_2$ over $N_1$ acting as follows:
$\pi\la x,y\ra=\la y,x\ra$ and 
$\pi u=\pi\{a_0,b_0,c_0\}=\blcb\tau_n(a_0),\tau_n(b_0),\tau_n(c_0)\brcb$, 
which in particular means $\pi\{x,y\}=\{x,y\}$ and $\pi u\neq u$, 
as desired. We can finally consider the image $f_3(u)=\{x,y\}$: 
If $\{x,y\}\nsubseteq N_2$ or $\{x,y\}\subseteq N_1$, then we can
apply the third property of {\PRP}\;\ref{prp:limit} with respect to $\{x,y\}$ 
and $N_1$ and~$N_2$ respectively, which gives us a contradiction.
Thus $\{x,y\}\subseteq N_2$ and $\{x,y\}\nsubseteq N_1$, and if there exists some finite 
set $E\in\fin(N_2)\setminus\emptyset$ with $g(E)=h(E)=\la x,y\ra$, then by the reasoning 
above we find that some automorphism of $N_2$ over $N_1$ does not 
preserve $f_3$, a contradiction. In every other case, we consider 
$\textrm{cl}(N_1\cup\{x,y\},M_*)$ and notice that, since for no 
$E\in\fin(N_2)\setminus\emptyset$ we have $h(E)=\la x,y\ra$ or $h(E)=\la y,x\ra$, then we claim that $u$ 
cannot be a subset of $\textrm{cl}(N_1\cup\{x,y\},M_*)$, which allows 
us to fix $\textrm{cl}(N_1\cup\{x,y\},M_*)$ pointwise, while not 
preserving $u$, a contradiction as well.
In order to prove the claim, let $U_0:=u$ and for 
$i\in\omega$, let $$U_{i+1}:=U_i\cup\{\tau_i(a):a\in U_i\}$$ and
define $U:=\bigcup_{i\in\omega}U_i$. Furthermore, we define a rank-function
$\operatorname{rk}:N_2\to\omega\cup\{-\infty\}$ by stipulating 
$$\operatorname{rk}(a):=
\begin{cases}
-\infty & \text{if $a\in N_1$},\\
0 & \text{if $a\in U$},\\
n+1 & \text{if $a\in\{x_E,y_E\}$, where $E\in\fin(N_2)\setminus\fin(N_1)$}\\
& \text{and $\max\{\operatorname{rk}(b): b\in E\}=n$},\\
n & \text{if $a=\tau_k(b)$ for some $k\in\omega$ with $\operatorname{rk}(b)=n$.}
\end{cases}$$
Since for no $E\in\fin(N_2)\setminus\emptyset$ we have 
$h(E)=\la x,y\ra$ or $h(E)=\la y,x\ra$, for any
$a\in\textrm{cl}(N_1\cup\{x,y\},M_*)\setminus N_1$
we have $\operatorname{rk}(a)\ge\min\blcb\operatorname{rk}(x),
\operatorname{rk}(y)\brcb$. Thus, the only way that $u\subs 
\textrm{cl}(N_1\cup\{x,y\},M_*)$ would be that $\{x,y\}\cap U\neq\emptyset$.
However, even in the case when $\{x,y\}\subs U$ ({\eg}, $\{x,y\}\subs 
\{a_0,b_0,c_0\}$), by the definition of $\tau_i$, at least one of the
elements of $\{a_0,b_0,c_0\}$ does not belong to $\textrm{cl}(N_1\cup\{x,y\},M_*)$. In particular, $u\nsubseteq\textrm{cl}(N_1\cup\{x,y\},M_*)$,
which proves the claim.
\end{proof}

So, the model $\modcV_{\NDiag}$ witnesses the following

\begin{conc} 
The existence of an infinite cardinal\/ $\fm$ satisfying
\[\xymatrix@R=10mm@C=3mm{
\fin(\fm)\ar@{->}[drr]& &\seq(\fm)\\ 
[\fm]^2\ar@{->}[u]& &\fm^2\ar@{->}[u]
}
\]
is consistent with\/ $\ZF$.
\end{conc}
\subsection{A Model for Diagram \protect\ZDiag}

We are now going to set an analogue framework to the one for Diagram~$\NDiag$, 
just with the definitions adapted, in order to show the consistency of 
Diagram~$\ZDiag$. In fact, as mentioned above, 
we can state the same proposition, guaranteeing the existence of a suitable 
$\aleph_1$-universal and $\aleph_1$-homogeneous model.

Let $K$ be the class of all the quadruples $(A,f,g,h)$ such that $A$ is a (possibly empty) 
set and $f,g,h$ are the following three injections:
$$f\colon A^2\to[A]^2\qquad 
g\colon[A]^2\to\textrm{seq}^{1-1}(A)\qquad h\colon\textrm{seq}^{1-1}(A)\to\textrm{fin}(A),$$ where the function $h$ satisfies $h(\emptyset)=\emptyset$. As before, we define a partial ordering $\leq$ on $K$ 
by stipulating $(A,f_1,g_1,h_1)\leq (B,f_2,g_2,h_2)$ if and only if 
\begin{itemize}
\item $A\subseteq B$, 
\item $f_1\subseteq f_2$,
$\ran\left(f_2|_{B^2\setminus A^2}\right)\subs [B]^2\setminus[A]^2$, 
\item $g_1\subseteq g_2$,
$\ran\left(g_2|_{[B]^2\setminus [A]^2}\right)\subs\iseq(B)\setminus\iseq(A)$,
\item $h_1\subseteq h_2$,
$\ran\left(h_2|_{\iseq(B)\setminus \iseq(A)}\right)\subs\fin(B)\setminus\fin(A)$.
\end{itemize}

\begin{prp}[CH]\label{prp:AtomsZ}
There is a model\/ $M_*$ of cardinality\/ $\mathfrak{c}$ in $K$ such that:
\begin{itemize}
    \item $M_*$ is\/ $\aleph_1$-universal, i.e., if\/ $N\in K$ is countable 
    then\/ $N$ is isomorphic to some\/ $N_*\leq M_*$.
    \item $M_*$ is\/ $\aleph_1$-homogeneous, i.e., if\/ $N_1,N_2\leq M_*$ are 
    countable and\/ $\pi\colon N_1\to N_2$ is an isomorphism then there exists 
    an automorphism\/ $\pi_*$ of\/ $M_*$ such that\/ $\pi\subs\pi_*$.
    \item If\/ $N\leq M_*$ and\/ $A\subs M_*$ are countable, then there is an 
    automorphism\/ $\pi$ of\/ $M_*$ over\/ $N$ such that\/ 
    $\pi(A)\setminus N$ is disjoint from\/ $A$. 
\end{itemize}
\end{prp}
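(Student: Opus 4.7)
The plan is to mimic the construction of $M_*$ carried out in the proof of {\PRP}\;\ref{prp:limit}, adapting only the amalgamation and one-step extension routines to accommodate three chained injections $f,g,h$ rather than a single one. Assuming CH so that $\omega_1 = \fc$, I would build $M_*$ by transfinite recursion along $\omega_1$, with $M_0 = \emptyset$, unions at limits, and at each successor step $\alpha+1$ setting $M_{\alpha+1} = M_\alpha \sqcup M'_\alpha \sqcup M''_\alpha$. Writing $C_\alpha$ for the collection of countable $N \leq M_\alpha$, the summand $M'_\alpha$ is the disjoint union of copies of representatives from each strong isomorphism class (over $N$) of countable one-step extensions of each $N \in C_\alpha$, while $M''_\alpha$ is the disjoint union of common extensions $M_1 * M_2$ with $M_1, M_2 \leq M_1 * M_2$, one for each unordered pair $\{M_1,M_2\} \in [C_\alpha]^2$.

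The one place where the adaptation is genuinely nontrivial—and the main obstacle—is the construction of the common extension $M_1 * M_2$, because the three injections are chained: freshly introduced atoms serving as images of $f$ immediately enlarge $\dom(g)$, the new $g$-values enlarge $\dom(h)$, and the new atoms appearing in ranges of $g$ and $h$ create new elements of $A^2$ whose $f$-image is still undetermined. The remedy is an $\omega$-step closure. Starting from $N_0 = M_1 \cup M_2$ with $f_0 = f^{M_1} \cup f^{M_2}$, $g_0 = g^{M_1} \cup g^{M_2}$, $h_0 = h^{M_1} \cup h^{M_2}$, at each stage $i$ I would let $\nE^f_i = N_i^2 \setminus \dom(f_i)$, $\nE^g_i = [N_i]^2 \setminus \dom(g_i)$, and $\nE^h_i = \iseq(N_i) \setminus \dom(h_i)$, and for every element of each of these three sets pick fresh atoms—a pair for each element of $\nE^f_i$ and $\nE^g_i$ (of appropriate shape), a finite set for each element of $\nE^h_i$—with all chosen atoms pairwise disjoint across distinct inputs and disjoint from $N_i$. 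Adding these atoms to form $N_{i+1}$ and extending the three functions accordingly ensures injectivity and, crucially, the range-disjointness required by $\leq$, so that $M_1, M_2 \leq N_{i+1}$. Setting $M_1 * M_2 = \bigcup_{i \in \omega} N_i$ with the pointwise unions of the functions yields the desired countable member of $K$.

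With $M_1 * M_2$ and the analogous one-step extension routine in place, the remainder of the argument is verbatim the proof of {\PRP}\;\ref{prp:limit}. The model $M_*$ clearly has cardinality $\fc$; $\aleph_1$-universality holds because each countable $N \in K$ is isomorphic to a copy embedded in some $M'_\alpha$; $\aleph_1$-homogeneity is obtained by the back-and-forth argument driven by an enumeration $\{x_\alpha : \alpha \in \omega_1\}$ of $M_*$, extending any given countable isomorphism $\pi\colon N_1 \to N_2$ through a chain of countable extensions supplied by $M'_\alpha$; and the third property follows by the same cofinality argument, producing two countable $M, M' \leq M_*$ above $N$ with $A \subseteq M$ and $M' \cap (A \setminus N) = \emptyset$ that are isomorphic over $N$, and then extending this isomorphism to an automorphism of $M_*$ via homogeneity.
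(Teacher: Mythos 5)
Your proposal is correct and matches the paper's intent exactly: the paper's own proof of this proposition is the single line ``essentially the same as {\PRP}\;\ref{prp:limit}'', relying on the earlier remark that the construction carries over verbatim to the framework $(A,f,g,h)$. Your elaboration of the $\omega$-step closure in the amalgamation to handle the chaining of the three injections is precisely the adaptation the paper has in mind (and is already present in spirit in the $\omega$-iteration defining $M_1*M_2$ in {\PRP}\;\ref{prp:limit}), so this is the same approach, just spelled out.
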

\begin{proof}
The proof is essentially the same as the one of {\PRP}\;\ref{prp:limit}.
\end{proof}

We define $\modcV_\ZDiag$ as the permutation model obtained by setting 
the elements of the $\aleph_1$-universal and $\aleph_1$-homogeneous model $M_*$ 
as the set of atoms and its automorphisms $\Aut(M_*)$ as the group $G$ of permutations. 
In particular, each permutation in $G$ preserves
the injections $f,g,h$ that the model $(M_*,f,g,h)$ comes with.

\begin{thm}
Let\/ $M_*$ be the set of
atoms of\/ $\modcV_\ZDiag$ and let\/ $\fm=|M_*|$. Then
$$\modcV_\ZDiag\models \fm^2<[\fm]^2<\iseq(\fm)<\fin(\fm)\,.$$ 
\end{thm}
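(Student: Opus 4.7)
The three $\leq$-relations $\fm^2\leq[\fm]^2\leq\iseq(\fm)\leq\fin(\fm)$ hold in $\modcV_\ZDiag$ because the structural injections $f,g,h$ that come with $(M_*,f,g,h)$ are fixed by every element of $G=\Aut(M_*)$ and hence lie in $\modcV_\ZDiag$. What the theorem actually asserts is the strictness of each of them, so the work is to show that in $\modcV_\ZDiag$ there is no injection of any of the three forms
\[
\phi_1\colon[M_*]^2\to M_*^2,\qquad
\phi_2\colon\iseq(M_*)\to[M_*]^2,\qquad
\phi_3\colon\fin(M_*)\to\iseq(M_*).
\]

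The plan for each case follows the template of the $\NDiag$ theorem. Assume by contradiction that $\phi_i\in\modcV_\ZDiag$ with finite support $S$, and enlarge $S$ to a countable submodel $N_1\leq M_*$. By $\aleph_1$-homogeneity combined with the third clause of {\PRP}\;\ref{prp:AtomsZ}, every automorphism of a countable $N_2\geq N_1$ in $K$ over $N_1$ lifts to an automorphism of $M_*$ fixing $N_1$ pointwise, and any atom of $M_*$ outside a prescribed countable closure can be moved while a chosen finite set stays pointwise fixed; this forces each value $\phi_i(\text{input})$ to have all its atoms in the closure of $N_1$ together with the atoms appearing in the input. For $\phi_1$ I would pick fresh atoms $x,y,z$ over $N_1$ and inspect $\phi_1(\{x,y\})=\pair{a}{b}$: the transposition $(x\,y)$ over $N_1$ fixes $\{x,y\}$ and so fixes $\pair{a}{b}$, while the $3$-cycle $(x\,y\,z)$ sends $\{x,y\}$ to $\{y,z\}$ and so carries $\pair{a}{b}$ to $\phi_1(\{y,z\})$; a short case analysis on the finitely many symmetry classes of ordered pairs in the $K$-closure of $N_1\cup\{x,y,z\}$ shows that no $S_3$-equivariant assignment from $[\{x,y,z\}]^2$ into the available pairs can also be injective. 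The case $\phi_2$ is analogous, using fresh $x_0,\ldots,x_{n-1}$ for $n$ sufficiently large and the $S_n$-equivariance imposed on the image pairs by the action of $S_n$ on the $n!$ distinct input sequences.

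The real crux is $\phi_3$, which I would handle by a direct adaptation of the delicate $\omega$-step construction at the end of the $\NDiag$ proof. Starting from $u=\{a_0,b_0,c_0\}$ fresh over $N_1$, I would alternate two steps: first, fill in the missing $f$-, $g$- and $h$-values on the current finite substructure using brand-new witness atoms; second, adjoin a twin copy of these witnesses via a copy function $\tau_i$ defining $f,g,h$ on the twin with two designated witnesses swapped. The resulting $(N_2,f^{N_2},g^{N_2},h^{N_2})\leq M_*$ admits a nontrivial automorphism over $N_1$ that exchanges $u$ with a disjoint set while fixing any candidate image $\phi_3(u)=\la c_0,\ldots,c_{k-1}\ra$ built from atoms of $N_2\setminus N_1$; the case where $\phi_3(u)$ uses atoms outside $N_2\setminus N_1$ is disposed of by the third clause of {\PRP}\;\ref{prp:AtomsZ} exactly as in the $\NDiag$ proof. \textbf{The main obstacle} will be the bookkeeping in this final construction: in contrast to the $\NDiag$ setting, where only one injection $h$ had to be extended at each stage, here every inductive step must simultaneously extend $f$, $g$, and $h$ and their twin versions while keeping the whole structure in $K$, preserving all three injectivity requirements, and leaving the swap automorphism available. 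This is a technical elaboration of the $\NDiag$ argument rather than a conceptually new one.
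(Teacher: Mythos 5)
There is a genuine gap, and it sits exactly where you place the least weight: in your treatment of $\phi_1$ (and, by your ``analogous'' remark, $\phi_2$). You only use the closure in the forward direction, forcing the atoms of $\phi_1(\{x,y\})$ into the extension $N_2$ of $N_1\sqcup\{x,y,z\}$, and then claim that a short case analysis shows no $S_3$-equivariant injective assignment of ordered pairs to the three input pairs exists. That claim is false. In the plain extension, each unordered pair $Q\subs\{x,y,z\}$ carries the canonical injective sequence $g(Q)=\la a_Q,b_Q,c_Q\ra$, and the assignment $Q\mapsto\la a_Q,b_Q\ra$ is equivariant under every automorphism over $N_1$ (since $g$ is part of the structure) and injective on the three pairs; so local equivariance plus counting cannot yield a contradiction. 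The paper's proof needs the closure in the \emph{reverse} direction: it defines $\textrm{cl}(\cdot,M_*)$ to be closed under $f,g,h$ \emph{and their preimages}, and shows that $\{x,y\}\subs\textrm{cl}(N_1\cup\{a,b\},M_*)$ must hold (otherwise the third property of the limit model moves $\{x,y\}$ off itself while fixing $\la a,b\ra$, killing injectivity of $\phi_1$ globally). This reverse inclusion is what eliminates the $g$- and $h$-witness candidates (a proper subset of a $3$-element witness set cannot be pulled back through $g^{-1}$ or $h^{-1}$, so the closure never reaches $\{x,y\}$) and pins down $\{a,b\}=f(\la x',y'\ra)$ exactly. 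Only then does the contradiction appear, and it is structural rather than combinatorial: $f$-values are \emph{unordered}, so an automorphism fixing $N_1\cup\{x,y\}$ pointwise can swap $a$ and $b$, whereas the value $\la a,b\ra$ is ordered. Your argument never isolates this step, which is the actual content of the $\fm^2<[\fm]^2$ case; the same reverse-closure step is also what drives the paper's $\phi_2$ argument, where the conclusion is that $\{x,y,z\}$ cannot lie in $\textrm{cl}(N_1\cup\{a,b\},M_*)$ at all.

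For $\phi_3$ you propose importing the twin-copy ($\tau_i$) machinery from the $\NDiag$ proof and explicitly leave the simultaneous bookkeeping for $f,g,h$ as an unresolved obstacle; as written this is a plan, not a proof. It is also much more than is needed: the paper handles $\phi_3$ with the same plain extension and reverse-closure step, observing that $\{x,y,z\}\subs\textrm{cl}(N_1\cup\{a_j:j\in n\},M_*)$ forces some $a_j$ to be a witness generated from a pair or sequence with values in $\{x,y,z\}$, and a $3$-cycle on $\{x,y,z\}$ then fixes the input set while moving every such witness, hence moving the image sequence. So the correct repair of your proof is not better bookkeeping in the $\phi_3$ construction but the systematic use of the preimage-closed closure operator, which you omit throughout.
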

\begin{proof}
The existence of the required injections is clear by the definition of the 
model. Thus, it remains to prove that there are no reverse injections. 
Here, as in {\THM}\;\ref{thm:model_N}, we
will make heavily use of the concepts of \textit{closure} and of plain extension: given a model $(M,f,g,h)$ 
and a countable subset $A\subs M$, 
we define the \textit{closure\/} $\textrm{cl}(A,M)$ as 
the smallest superset of $A$ that is closed under $f,g,h$ and pre-images 
with respect to the same functions. Constructively, we can characterize 
$\textrm{cl}(A,M)$ as a countable union as follows: 
Define $\textrm{cl}_0=\textrm{cl}_0(A,M):=A$ and, for all $i\in\omega$,
\begin{align*}
    \textrm{cl}_{i+1}= \textrm{ cl}_i &\;\cup \bigcup_{p\in(\textrm{cl}_i)^2}\hspace{-1ex}f(p)\; 
    \cup 
    \bigcup_{q\in[\textrm{cl}_i]^2}\hspace{-1ex}\ran(g(q))\;\cup
    \hspace{-2ex}\bigcup_{s\in\iseq(\textrm{cl}_i)}\hspace{-2.5ex}h(s) \\[1.0ex]
    & \cup 
    \hspace{-2ex}\bigcup_{q\in[\textrm{cl}_i]^2\cap\ran(f)} 
    \hspace{-3.5ex}\ran(f^{-1}(q)) \hspace{2ex}\cup 
    \hspace{-3ex}\bigcup_{s\in\textrm{seq}^{1-1}(\textrm{cl}_i)\cap\ran(g)}\hspace{-5ex}g^{-1}(s) \hspace{2ex}\cup 
    \hspace{-3ex}\bigcup_{r\in\textrm{fin}(\textrm{cl}_i)\cap\ran(h)}
    \hspace{-3ex}\ran(h^{-1}(r))\,,
\end{align*}
in order to finally define $\textrm{cl}(A,M):=\bigcup_{i\in\omega}
\textrm{cl}_i$. 
Furthermore, we set a standardized way 
to extend a \textit{partial} model $(A,f',g',h')$, where $f',g',h'$ are only 
partial functions, to an element of $K$: Consider $(A,f',g',h')$, 
where $A$ is a set and $f',g',h'$ are injections with $h'(\emptyset)=\emptyset$ and
\begin{center}
$$\begin{array}{ccc}
  \dom(f')\subseteq A^2,&\dom(g')\subseteq [A]^2, & \dom(h')\subseteq\textrm{seq}^{1-1}(A)\\[1.4ex]
  \ran(f')\subseteq [A]^2, &\ran(g')\subseteq\textrm{seq}^{1-1}(A),& \ran(h')\subseteq\textrm{fin}(A).
\end{array}
$$
\end{center}
Let $(M_0,f'_0,g'_0,h'_0)=(A,f',g',h')$ and, for $j\in\omega$, define inductively   
$(M_{j+1},f'_{j+1},g'_{j+1},h'_{j+1})$ as follows: $M_{j+1}$ is the fully 
disjoint union $$M_j\quad\sqcup\bigsqcup_{P\in M_j^2\setminus\dom(f'_j)}
\hspace{-3ex}\{a_P,b_P\}\quad\sqcup\bigsqcup_{Q\in[M_j]^2\setminus\dom(g'_j)}\hspace{-3.4ex}\{a_Q,b_Q,c_Q\}
\quad\sqcup\bigsqcup_{R\in\iseq(M_j)\setminus\dom(h'_j)}\hspace{-5ex}\{a_R,b_R,c_R\}.$$ 
For what concerns the injections $f'_{j+1},g'_{j+1},h'_{j+1}$, we naturally 
require the inclusions $f'_j\subseteq f'_{j+1}$, $g'_j\subseteq g'_{j+1}$, and $h'_j\subseteq h'_{j+1}$, 
as well as the equalities $\dom(f'_{j+1})=M_j^2$, $\dom(g'_{j+1})=[M_j]^2$, 
and $\dom (h'_{j+1})=\textrm{seq}^{1-1}(M_j)$, respectively, where for
$P\in M_j^2\setminus\dom (f'_j)$, 
$Q\in[M_j]^2\setminus\dom (g'_j)$, and $R\in\textrm{seq}^{1-1}(M_j)
\setminus\dom (h'_j)$, we define  $$f'_{j+1}(P):=\{a_P,b_P\}\,,\qquad g'_{j+1}(Q):=
\langle a_Q,b_Q,c_Q\rangle\,,\qquad h'_{j+1}(R):=\{a_R,b_R,c_R\}\,.$$
We are now in the position of defining the plain extension of $(A,f',g',h')$ as 
$$(M,f,g,h):=\left(\bigcup_{j\in\omega}M_j,\;\bigcup_{j\in\omega}f'_j,\;\bigcup_{j\in\omega}g'_j,\;
\bigcup_{j\in\omega}h'_j\right),$$ 
and we can finally prove, in three analogous steps, 
that neither of the three injections of the model $(M_*,f,g,h)$ 
admits a reverse injection.

Assume there is an injection $i\colon [M_*]^2\to M_*^2$ with finite support 
$S$. Let $N_1\in K$ 
be a countable model such that $N_1\leq M_*$ and $S\subs N_1$. 
Let $\{x,y\}\in [M_*]^2$ with $N_1\cap\{x,y\}=\emptyset$, 
let $M_0=N_1\sqcup\{x,y\}$, and let $N_2$ be the plain extension of $M_0$.
Without loss of generality we can assume that $N_2\leq M_*$. 
Consider $\la a,b\ra=i(\{x,y\})$. 
Then $\{a,b\}\nsubseteq N_1$ and $\{a,b\}\subs N_2$, since otherwise we could 
apply the third property of {\PRP}\;\ref{prp:AtomsZ} with respect to $\{a,b\}$ 
and $N_1$ and $N_2$, 
respectively. Moreover, $\{a,b\}\cap\{x,y\}=\emptyset$, since otherwise,
({\eg}, $a=x$), we could swap $x$ and $y$ while fixing $S$ pointwise, but this would not preserve the injection $i$, which was assumed to have support $S$, a contradiction.
Furthermore, we have 
\begin{equation}
\{x,y\}\;\subs\;\textrm{cl}(N_1\cup\{a,b\},M_*)\tag{$*$}
\end{equation}
since otherwise we could 
apply the third property of {\PRP}\;\ref{prp:AtomsZ} with respect to $\{x,y\}$ and
\hbox{$\textrm{cl}\bigl(N_1\cup\{a,b\},M_*\bigr)\leq M_*$.} 
Now, this last inclusion implies that 
$a\neq b$ and that $\{a,b\}=f(\la x',y'\ra)$ for 
some $\la x',y'\ra\in N^2_2\setminus N^2_1$. 
To see this, notice first that since $N_1\cup \{a,b\}\subs N_2$,
we build the closure 
\hbox{$\textrm{cl}\bigl(N_1\cup\{a,b\},M_*\bigr)$} within the
plain extension~$N_2$, and recall that for $\{u,v\}\in [N_2]^2\setminus [M_0]^2$
we have $g(\{u,v\})=\la x_1,x_2,x_3\ra$ where $\la x_1,x_2,x_3\ra\in 
\iseq(N_2\setminus M_0)$, and that for $\la x_1,\ldots,x_n\ra\in\iseq(N_2)\setminus
\iseq(M_0)$ we have $h(\la x_1,\ldots,x_n\ra)\in [N_2\setminus M_0]^3$.
If there are no $x',y'$ such that $f(\la x',y'\ra)=\{a,b\}$, then, since
$\{a,b\}\subs N_2$ and $\{a,b\}\nsubseteq N_1$, $\{a,b\}$  cannot be a superset of any $\ran\bigl(g(\{u,v\})\bigr)$ for some $u,v$, 
or of any $h(\la x_1,\ldots,x_n\ra)$ for some $x_1,\ldots,x_n$, so we have that $\{x,y\}\nsubseteq\textrm{cl}(N_1\cup\{a,b\},M_*)$,
which is a contradiction to~$(*)$.
Now, since $f(\la x',y'\ra)=\{a,b\}$, by the construction of the plain extension $N_2$
we find an automorphism $\pi$ of $N_2$ 
that fixes $N_1\cup\{x,y\}$ pointwise and for which we have $\pi(a)=b$ and $\pi(b)=a$.
Hence, $i(\pi\{x,y\})=\la a,b\ra\neq\la b,a\ra=\pi i(\{x,y\})$, which is a
contradiction.

Assume there is an injection $i\colon \mathrm{seq}^{1-1}(M_*)\to [M_*]^2$ with 
finite support $S$. Let $N_1\in K$ be a countable model such that
$N_1\leq M_*$ and $S\subs N_1$. Let $\la x,y,z\ra\in\iseq(M_*)$ with 
$N_1\cap\{x,y,z\}=\emptyset$, let $M_0=N_1\sqcup\{x,y,z\}$, and 
let $N_2$ be the plain extension of $M_0$. Finally, let $\{a,b\}=i(\la x,y,z\ra)$. 
Then, a contradiction follows by noticing\,---\,with similar arguments as 
above\,---\,that necessarily 
$\{x,y,z\}\not\subs\textrm{cl}\bigl(N_1\cup\{a,b\},M_*\bigr)$.
So, similarly as above, there is an automorphism $\pi$ of $M_*$ which fixes 
$\textrm{cl}(N_1\cup\{a,b\},M_*)$ pointwise, but $\pi(\{x,y,z\})\neq\{x,y,z\}$.

Finally, assume there is an injection $i\colon\fin(M_*)\to\mathrm{seq}^{1-1}(M_*)$ 
with finite support $S$. Let $N_1\in K$ 
be a countable model such that $N_1\leq M_*$ and $S\subs N_1$. 
Let $\{x,y,z\}\in [M_*]^3$ be such that \hbox{$N_1\cap\{x,y,z\}=\emptyset$,} 
let $M_0=N_1\sqcup\{x,y,z\}$, and let $N_2$ be the plain extension of $M_0$.
Consider $\la a_j:j\in n\ra=i(\{x,y,z\})$ for some $n\in\omega$. It is easy 
to see that we must have $\{a_j:j\in n\}\cap (N_2\setminus M_0)\neq\emptyset$, 
and as before it must also hold $\{x,y,z\}\subs
\textrm{cl}\bigl(N_1\cup\{a_j:j\in n\},M_*\bigr)$. 
In what follows, for a natural number $n$, we will refer to a cyclic permutation of order $n$ by using the term ``$n$-cycle''. Our next step is to prove that a $3$-cycle $\pi$ applied to $\{x,y,z\}$ 
cannot leave each element of $\{a_j:j\in n\}\cap (N_2\setminus M_0)$ unchanged, for at least one automorphism $\sigma$ of $N_2$ extending $\pi$ does not fix any element of $N_2\setminus M_0$, as the following argument shows: Assume there is such an automorphism $\sigma$ fixing a first appearing element $c\in M_n\setminus M_{n-1}$ in the construction of the plain extension of $M_0$. Notice that $\sigma$ moves every unordered pair, ordered pair and injective sequence with non-empty intersection with $\{x,y,z\}$. Now, by looking at the possible ways that $c$ could have appeared in $N_2$, one sees that if $c$ is fixed, then some pair $p\in [M_{n-1}]^2$ such that $c\in\ran(g(p))$ also satisfies $\sigma(p)=p$. Now, since $c$ was the first appearing element being fixed, we can say that for $p=\{a,b\}\in [M_{k}]^2$, for which $p=\{a,b\}\notin [M_{k-1}]^2$ holds, it is true that $a,b$ are the first appearing elements being swapped by $\sigma$. Similarly as before, this implies that there are two pairs $p',p''\in[M_{k-1}]^2$ such that $\sigma(p)=p'$ and $\sigma(p')=p$, which in turn requires either the existence of two ordered pairs in $M_{k-1}^2$ swapped by $\sigma$, either contradicting the fact that $a$ and $b$ were the first appearing swapped elements, or implying the existence of a four-element set $\{s,r,t,p\}\subseteq N_2\setminus M_0$ on which $\sigma$ acts as a four-cycle, but we can extend $\pi$ to at least one $\sigma$ such that $\sigma^3 = \textrm{Id}_{N_2}$, which does not allow four-cycles, and this concludes the proof.
\end{proof}

So, the model $\modcV_{\ZDiag}$ witnesses the following

\begin{conc} 
The existence of an infinite cardinal\/ $\fm$ satisfying
\[\xymatrix@R=10mm@C=3mm{
\fin(\fm)\ar@{<-}[rr]& &\iseq(\fm)\\ 
[\fm]^2\ar@{->}[urr]& &\fm^2\ar@{->}[ll]
}
\]
is consistent with\/ $\ZF$.
\end{conc}

\subsection{A Model for Diagram \protect\NrevD} 

We show that Diagram~\protect{\NrevD} holds in the model constructed
in~\cite{weird} (see also~\cite[p.\,209\,ff]{cst}), 
where $\fm$ is the cardinality of the set of atoms of that model.

The atoms of the permutation model $\modcV_{\NrevD}$ for 
Diagram~\protect{\NrevD} are constructed as follows:
\begin{itemize}
\setlength{\itemsep}{.5ex}
\item[($\alpha$)] Let $A_0$ be an arbitrary infinite set.
\item[($\beta$)] $\nG_0$ is the group of {\it all\/} permutations of $A_0$.
\item[($\gamma$)] $A_{n+1}:=A_n\cup\blcb (n+1,p,\varepsilon):
p\in\bigcup_{k=0}^{n+1} A_n^k\wedge\varepsilon\in\{0,1\}\brcb$.
\item[($\delta$)] $\nG_{n+1}$ is the subgroup of the
permutation group of $A_{n+1}$ containing all permutations 
$\sigma$ for which there are
$\pi_\sigma\in\nG_n$ and $\varepsilon_{\sigma,p}\in\{0,1\}$ such that
\begin{eqnarray*}
\sigma(x)=
\begin{cases}
\pi_\sigma(x) &\mbox{if $x\in A_n$,}\cr\noalign{\vspace{3pt}}
(n+1,\pi_\sigma(p),\varepsilon_{\sigma,p}+_2\varepsilon) 
&\mbox{if $x=(n+1,p,\varepsilon)$,}
\end{cases}
\end{eqnarray*}
where for $p=\langle p_0,\ldots,p_{l-1}\rangle\in
\bigcup_{0\le k\le n+1} A_n^k$, 
$\pi_\sigma(p):=\langle \pi_\sigma(p_0),\ldots,
\pi_\sigma(p_{l-1})\rangle$
and $+_2$ denotes addition modulo $2$.
\end{itemize}

Let $A:=\bigcup\{A_n:n\in\omega\}$ be the set of atoms and let 
$\Aut(A)$ be the group of all permutations of $A$. Then
\[
\nG:=\blcb H\in\Aut(A):\forall n\in\omega\,(H\res{A_n}\in\nG_n)\brcb
\]
is a group of permutations of $A$. 
The sets in $\modcV_{\NrevD}$ are subsets of $\modcV_{\NrevD}$ with finite support.

\begin{prp} Let\/ $A$ be the set of
atoms of\/ $\modcV_{\text{\rm\NrevD}}$ and let\/ $\fm:=|A|$. Then
$$\modcV_{\text{\rm\NrevD}}\models \fm^2<\iseq(\fm)<[\fm]^2<\fin(\fm)\,.$$ 
\end{prp}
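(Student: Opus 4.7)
The plan is to produce three $\nG$-equivariant injections witnessing $\fm^2 \le \iseq(\fm) \le [\fm]^2 \le \fin(\fm)$ in $\modcV_{\NrevD}$, and then to prove strictness of each inequality by the usual finite-support argument. The first and third injections are standard: $(a,b) \mapsto \la a,b\ra$ if $a\ne b$ and $(a,a) \mapsto \la a\ra$; and the trivial inclusion $\{x,y\} \mapsto \{x,y\}$. Both are invariant under all of $\nG$, hence lie in $\modcV_{\NrevD}$. The middle injection $\phi\colon\iseq(A)\to[A]^2$ exploits the pair-structure of the model: for $p=\la a_1,\ldots,a_l\ra$, let $m_0$ be the least $m$ with $\{a_1,\ldots,a_l\}\subs A_m$, set $n:=\max\{l-1,m_0\}$, and define $\phi(p):=\bigl\{(n+1,p,0),\,(n+1,p,1)\bigr\}$. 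Any $\sigma\in\nG$ sends $(n+1,p,\varepsilon)$ to $(n+1,\sigma(p),\varepsilon+_2\varepsilon_{\sigma,p})$, so the unordered pair $\phi(p)$ is mapped setwise onto $\phi(\sigma(p))$; this shows $\phi\in\modcV_{\NrevD}$, and injectivity is clear from the middle coordinate.

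For strictness, the plan is to assume a reverse injection $f$ in $\modcV_{\NrevD}$ with a finite support $E\subs A_m$ and derive a contradiction from $\Fix_\nG(E)$-symmetries. The strict inequality $\iseq(\fm)<[\fm]^2$ is immediate from \cite{weird}, which establishes $\seq(\fm)<[\fm]^2$ in this model, combined with the ZF-provable $\iseq(\fm)\le\seq(\fm)$. For $[\fm]^2<\fin(\fm)$, I would take a triple $F=\{a,b,c\}\subs A_0\setminus E$ and consider $f(F)=\{x,y\}$: the canonical lifts of the permutations of $F$ lie in $\Fix_\nG(E)$ and fix $F$ setwise, hence preserve $\{x,y\}$ setwise; pushing this through forces the image of $F$ to be determined essentially only by $E$, which cannot distinguish the images of infinitely many disjoint such triples. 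For $\fm^2<\iseq(\fm)$, I would analogously consider injective sequences $s$ of elements of $A_0\setminus E$ of growing lengths and use the automorphisms in $\Fix_\nG(E)$ that fix $\ran(s)$ pointwise to restrict where $f(s)\in A^2$ can lie, then pass to orbits to contradict injectivity.

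The principal obstacle will be $\fm^2<\iseq(\fm)$, since the baseline injection uses only length-$\le 2$ sequences, so strictness requires the genuinely longer sequences to contribute essential information; moreover, the support-restriction argument is strictly weaker for ordered pairs in $A^2$ than for unordered pairs in $[A]^2$. The key will be to exploit the $\varepsilon$-flipping automorphisms which swap paired atoms $(n+1,p,0)\leftrightarrow(n+1,p,1)$ while fixing all of $A_0$, combined with the full permutation action on $A_0$: these jointly force any image $f(s)\in A^2$ to depend on $s$ only through invariants coarse enough that infinitely many length-$(m+2)$ sequences collapse to the same pair.
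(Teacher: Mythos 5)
Your outline is correct and shares the paper's skeleton (cite \cite{weird} for $\iseq(\fm)<[\fm]^2$, then finite-support symmetry arguments for the two remaining strict inequalities), but the execution differs in ways worth recording. First, the paper never constructs your middle injection $\phi(p)=\{(n+1,p,0),(n+1,p,1)\}$: the inequality $\iseq(\fm)\le[\fm]^2$ is already contained in the cited result $\seq(\fm)<[\fm]^2$, so only the two \emph{trivial} ZF-injections are needed; your $\phi$ is correct and equivariant, just redundant. Second, for the two non-reversibility claims the paper uses pure counting over one large finite $X\subs A_0\setminus E$: it first closes the support $E$ under adding tuple-components and partner atoms $(n+1,p,1-\varepsilon)$, then picks $k$ with $|\iseq(X)|\ge k!>(|E|+k)^2=|(E\cup X)^2|$ (respectively $2^k>\binom{|E|+k}{2}$), so some $s\in\iseq(X)$ (resp.\ $u\subs X$) has image outside $(E\cup X)^2$ (resp.\ $[E\cup X]^2$) and is then moved by some $\pi\in\Fix_{\nG}(E\cup X)$. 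Your single-witness-plus-orbit strategy can be made to work, but two of your guiding remarks are off. The closure step is not optional: without it, ``the image is determined only by $E$'' is false, since $\Fix_{\nG}(E)$ automatically fixes the partners and components of atoms in $E$. More importantly, you have the difficulty backwards: $\fm^2<\iseq(\fm)$ is the \emph{easy} case, because an image in $A^2$ is an ordered pair and must be fixed \emph{pointwise} by the stabilizer of $s$, so it is confined to the finite closed set $E\cup\ran(s)$ and the factorial-versus-square count finishes it. The delicate case is $[\fm]^2<\fin(\fm)$, precisely because an unordered pair need only be fixed \emph{setwise}, and the partner-pairs $\{(n+1,p,0),(n+1,p,1)\}$ with $\ran(p)$ inside the closed support are setwise-invariant without lying in $[E\cup X]^2$; your orbit analysis (a $2$-element set invariant under the full setwise stabilizer of a triple $F\subs A_0$ must be a union of orbits of total size $2$, hence already $\Fix_{\nG}(E)$-invariant, hence constant on the infinite orbit of $F$) is exactly the extra step needed there, and in that one place your route is arguably more careful than the paper's ``by similar arguments as above.''
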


\begin{proof}
In~\cite{weird} it is shown that $\modcV_{\NrevD}\models\seq(\fm)<[\fm]^2$, 
which implies that $\modcV_{\NrevD}\models\iseq(\fm)<[\fm]^2$. Thus, since
$\fm^2\le \iseq(\fm)$ and $[\fm]^2\le\fin(\fm)$, it remains to show that
in $\modcV_{\NrevD}$ we have $\fm^2\neq\iseq(\fm)$ and $[\fm]^2\neq\fin(\fm)$.
\medskip

\noindent $\fm^2\neq\iseq(\fm)$: We show that there is no
injection $g_1:\iseq(A)\to A^2$. Assume towards a contradiction
that there is such an injection with finite support $E_1$.

By extending $E_1$ if necessary, we may assume that if 
\hbox{$(n+1,\langle a_0,\ldots,a_{l-1}\rangle,\varepsilon)\in E_1$}, 
then also $a_0,\ldots,a_{l-1}$ belong to $E_1$ as well as the
atom $(n+1,\langle a_0,\ldots,a_{l-1}\rangle,1-\varepsilon)$. We say that a finite subset of $A$ satisfying this condition is $\textit{closed}$.

For a large enough number $k\in\omega$ choose a $k$-element set
$X\subs A_0\setminus E_1$ such that
$|\iseq\left(X\right)|>\big{|}\left(E_1\cup X\right)^2\big{|}$.
Notice that $|\iseq\left(X\right)|\ge k!$ and 
that $\big{|}\left(E_1\cup X\right)^2\big{|}=\left(|E_1|+k\right)^2$.
Thus, we find a sequence $s\in\iseq(X)$ such that $g_1(s)\notin 
\left(E_1\cup X\right)^2$. So, there exists a $\pi\in\Fix_{\nG}(E_1\cup X)$ 
such that $\pi g_1(s)\neq g_1(s)$ but $\pi s=s$, which contradicts 
the fact that $E_1$ is a support of~$g_1$.\smallskip

\noindent $[\fm]^2\neq\fin(\fm)$: We show that there is no
injection $g_2:\fin(A)\to [A]^2$. Assume towards a contradiction
that there is such an injection with closed finite support~$E_2$.
For a large enough number $k\in\omega$ we choose again a $k$-element set
$X\subs A_0\setminus E_2$ such that
$|\fin\left(X\right)|>\big{|}\left[E_2\cup X\right]^2\big{|}$ and such that we can find a subset $S\subseteq X$ with $P:=g_2(S)\setminus(E_2\cup X)\neq\emptyset$ and $|S|\geq 2$. If $|P|=1$ it is clear that we can find a permutation $\pi\in G$ which fixes $E_2$ pointwise and for which $\pi(S)=S$ and $\pi(g_2(S))\neq g_2(S)$. Likewise, we find a contradiction also if $|P|=2$ and $P$ is not in the form $\{(l, p, 0),\,(l, p, 1)\}$ for some $l\in\omega\setminus\{0\}$ and $p\in \bigcup_{j=0}^{\,l}A_{l-1}^j$, so let us assume that $P$ is indeed in that form. Consider the extension $P'$ of $P$ to the smallest closed superset $P\subseteq P'$. If we can find $x\in S\setminus P'$ then $G$ contains a permutation $\pi$ with $\pi(x)\notin S$ and $\pi(P)=P$, which is a contradiction, so $S\setminus P'$ must be empty. We notice that if $a\in A$ is an atom, $Y$ is the smallest closed set of atoms containing $a$ and $b\in A_0\cap Y$, then for all permutations $\pi\in G$ we have that $\pi(b)\neq b$ implies $\pi(a)\notin Y$. We can now conclude since every element of $S$ is in the closure of $P$ and we can find a permutation $\pi\in G$ which fixes pointwise $E_2$ with $\pi(S)=S$ but for which $S$ is not fixed pointwise.
\end{proof}

So, the permutation model $\modcV_{\NrevD}$ witnesses the following

\begin{conc} 
The existence of an infinite cardinal\/ $\fm$ satisfying
\[\xymatrix@R=10mm@C=3mm{
\fin(\fm)\ar@{<-}[d]& &\iseq(\fm)\\ 
[\fm]^2\ar@{<-}[urr]& &\fm^2\ar@{->}[u]
}
\]
is consistent with\/ $\ZF$.
\end{conc}
\subsection{A Model for Diagram \protect\CrevD} 

We show that Diagram~\protect{\CrevD} holds in a permutation model 
$\modcV_{\CrevD}$ which is similar to the {\it Second Fraenkel Model}, 
where $\fm$ is the cardinality of the set of atoms of $\modcV_{\CrevD}$. 

The permutation model $\modcV_{\CrevD}$ is constructed as follows
(see also \cite[p.\,197]{cst}):
The set of atoms of the model $\modcV_{\CrevD}$ consists of
countably many mutually disjoint, cyclically ordered $3$-element sets. 
More formally,
\[
A=\bigcup_{n\in\omega} P_n,
\quad
\mbox{where }P_n=\{a_n,b_n,c_n\}\ (\mbox{for }n\in\omega),
\]
and the cyclic ordering on $P_n$ is illustrated by the following figure:
\begin{figure}[!ht]
\centering
\includegraphics[scale=.9]{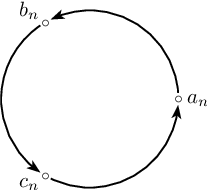}
\end{figure}

On each triple $P_n$, we define the cyclic distance between two elements
by stipulating $$\cy{a_n}{b_n}=\cy{b_n}{c_n}=\cy{c_n}{a_n}=1$$ and 
$$\cy{a_n}{c_n}=\cy{b_n}{a_n}=\cy{c_n}{b_n}=2\,.$$

Let $\nG$ be the group of those permutations of $A$ which preserve the
triples $P_n$ (\ie, $\pi P_n=P_n$ for $\pi\in\nG$ and $n\in\omega$)
and their cyclic ordering.
The sets in $\modcV_{\CrevD}$ are subsets of $\modcV_{\CrevD}$ with finite support.

\begin{prp} Let\/ $A$ be the set of
atoms of\/ $\modcV_{\text{\rm\CrevD}}$ and let\/ $\fm:=|A|$. Then
$$\modcV_{\text{\rm\CrevD}}\models [\fm]^2<\fm^2<\iseq(\fm)<\fin(\fm)\,.$$ 
\end{prp}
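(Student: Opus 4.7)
Plan: I would prove the claim by establishing in $\modcV_{\CrevD}$ each of the three ``$\le$'' inequalities and the three corresponding ``$\not\le$'' non-inequalities. For $[\fm]^2\le\fm^2$, an equivariant injection sends $\{x,y\}$ to $(x,y)$ with $\cy{x}{y}=1$ when $x,y$ lie in a common triple $P_n$, and to $(x,y)$ with $n<m$ when $x\in P_n,\,y\in P_m$ for $n\ne m$; this has empty support. For $\fm^2\le\iseq(\fm)$, the standard map $(x,y)\mapsto\la x,y\ra$ (with $(x,x)\mapsto\la x\ra$) works. For $\iseq(\fm)\le\fin(\fm)$, I would construct an equivariant $\phi\colon\iseq(A)\to\fin(A)$ of the form $\phi(s)=\bar s\cup\Psi(s)$, where $\bar s$ is the underlying set of $s$ and $\Psi(s)$ lives in ``auxiliary'' triples $P_N,P_{N+1},\ldots$ chosen past every index touched by $\bar s$, exploiting the cyclic structure of each triple to encode orientation information and the natural enumeration of $\{P_n\}_{n\in\omega}$ to encode across-triple positional data.

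The three strict inequalities are proved by contradiction. Assume an equivariant injection in the reverse direction with finite support $E$, and write $F:=\bigcup\{P_k:P_k\cap E\ne\emptyset\}$ for the set of atoms fixed by $\Fix_{\nG}(E)=\prod_{k:\,P_k\cap E=\emptyset}\la\rho_k\ra$. The core tool is \emph{stabilizer preservation}: an equivariant injection makes the $\Fix_{\nG}(E)$-stabilizer of $x$ coincide with that of its image.

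For $\fm^2\not\le[\fm]^2$, I pick distinct triples $P_n,P_m$ both disjoint from $E$. For any $(x,y)\in P_n\times P_m$, its image $f(x,y)$ must be moved by both $\rho_n,\rho_m$ and fixed by every other non-trivial rotation in $\Fix_{\nG}(E)$, hence $f(x,y)=\{p,q\}$ with $p\in P_n,\,q\in P_m$. The same constraint holds for $(x,y)\in P_m\times P_n$, so $f$ injects $18$ distinct ordered pairs into the $9$-element set $\{\{p,q\}:p\in P_n,\,q\in P_m\}$, a contradiction. For $\iseq(\fm)\not\le\fm^2$, pick $P_n$ disjoint from $E$; every sequence in $\iseq(P_n\cup F)$ using at least one atom of $P_n$ has stabilizer $\prod_{k\ne n,\,P_k\cap E=\emptyset}\la\rho_k\ra$, so its image lies in $(P_n\cup F)^2$ with at least one coordinate in $P_n$, a set of size $9+6|F|$. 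When $|F|=0$ there are $15$ such sequences competing for $9$ slots, and for larger $|F|$ the number of sequences grows factorially while the target grows only linearly, overflowing it. For $\fin(\fm)\not\le\iseq(\fm)$, pick $P_n$ disjoint from $E$ and consider subsets $S=U\cup V\cup\bigcup_{m\in T}P_m$ with $U\subs P_n$ neither empty nor all of $P_n$, $V\subs F$, and $T$ any finite subset of $\{k\ne n:P_k\cap E=\emptyset\}$; each such $S$ shares the same stabilizer, so $h(S)$ is a sequence in $\iseq(P_n\cup F)$ using $P_n$, a finite set, while the choices of $T$ give infinitely many $S$, contradicting injectivity.

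The main obstacle is the explicit construction of the equivariant $\phi\colon\iseq(A)\to\fin(A)$: encoding all $k!$ orderings of a length-$k$ sequence inside a finite subset requires carefully exploiting the cyclic structure of each triple together with the natural enumeration of $\{P_n\}_{n\in\omega}$, and verifying that the resulting scheme commutes with every $\pi\in\nG$. The three non-injection arguments are then routine orbit-and-stabilizer pigeonhole calculations once the stabilizer-preservation principle is in place.
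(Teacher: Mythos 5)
Your overall architecture matches the paper's (three explicit equivariant injections, three pigeonhole-style non-injection arguments), and your negative arguments are correct: the stabilizer-preservation principle is valid for an injection with support $E$, and the resulting counts ($18$ ordered pairs into $9$ unordered pairs, $15$ sequences into $9$ pairs, infinitely many sets $U\cup V\cup\bigcup_{m\in T}P_m$ sharing one stabilizer versus the finite set of sequences in $\iseq(P_n\cup F)$) all go through. The paper instead compares raw cardinalities over a support closed under triples and then moves the image with a suitable $\pi\in\Fix_{\nG}(E)$; the two styles are interchangeable here. Your injection $[A]^2\to A^2$ (orienting a within-triple pair by cyclic distance $1$) also works, though the paper sends $\{x,y\}\subs P_m$ to $\la z,z\ra$ for the third element $z$ of $P_m$.

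The genuine gap is in $\iseq(\fm)\le\fin(\fm)$, and it is not merely that you left the construction incomplete: the specific form you propose, $\phi(s)=\bar s\cup\Psi(s)$ with $\Psi(s)$ contained in fresh triples past every index touched by $\bar s$, cannot be made injective. Equivariance forces $\Psi(s)\cap P_N\in\{\emptyset,P_N\}$ for every triple $P_N$ disjoint from $\bar s$ (apply the rotation $\rho_N$, which fixes $s$), so $\Psi(s)$ is a union of whole untouched triples. Now take any $s$ whose range contains a complete triple $P_n$. Then $\rho_n s\neq s$, yet $\overline{\rho_n s}=\bar s$ (rotating a full triple fixes it setwise) and $\rho_n\Psi(s)=\Psi(s)$, hence $\phi(\rho_n s)=\rho_n\phi(s)=\phi(s)$, killing injectivity. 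This is precisely why the paper does \emph{not} put the full range of $s$ into the output set: it records only the \emph{first} element of $s$ in each triple it visits (so a nontrivial rotation of a visited triple still moves the output), encodes the cyclic offsets of the later elements together with the positional data as a single invariant number $q_s$ built from prime powers, and appends the one whole triple $P_{q_s}$. You would need to repair your $\phi$ along these lines before the proposition is fully proved.
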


\begin{proof} We first show that $[\fm]^2\le\fm^2$,
and $\iseq(\fm)\le\fin(\fm)$, and then we show that
$[\fm]^2\neq\fm^2$, $\fm^2\neq\iseq(\fm)$, and $\iseq(\fm)\neq\fin(\fm)$.
\medskip

\noindent $[\fm]^2\le\fm^2$: We define an injective function $f_1:[A]^2\to A^2$.
Let $\{x,y\}\in [A]^2$ and $m,n\in\omega$ be 
such that $x\in P_m$ and $y\in P_n$. Without loss of generality we may assume
that $m\le n$. If $m<n$, then $f_1(\{x,y\}):=\la x,y\ra$, and if $m=n$, then
$f_1(\{x,y\}):=\la z,z\ra$ where $z:=P_m\setminus\{x,y\}$. It is easy to see 
that $f_1$ is an injective function, and since $f_1$ has empty support, $f_1$
belongs to~$\modcV_{\CrevD}$.\smallskip

\noindent $\iseq(\fm)\le\fin(\fm)$: We define an injective function $f_3:\iseq(A)\to\fin(A)$.
First, let $f_3(\la\;\ra):=\emptyset$. Now, let $s=\la x_0,\ldots,x_{k-1}\ra\in\iseq(A)$ be 
a non-empty sequence without repetition of length~$k$. Let $j:k\to\omega$ be such that 
for each $i\in k$, $x_i\in P_{j(i)}$. Let $E_0:=\emptyset$, and by induction, for $i\in k$ define
$$E_{i+1}:=
\begin{cases} 
E_i\cup\{x_{i}\} &\text{if $P_{j(i)}\cap E_i=\emptyset$,}\\[1.2ex]
E_i &\text{otherwise,}
\end{cases}
$$
and
$$\epsilon_{i}:=
\begin{cases} 
2 &\text{if $P_{j(i)}\cap E_i=\{u\}$ and $\cy{u}{x_i}=2$,}\\[1.2ex]
1 &\text{otherwise.}
\end{cases}
$$
Furthermore, let $\sigma(0):=j(0)$, and by induction, for $i\in k-1$ define
$\sigma(i+1):=\sigma(i)+j(i+1)+1$. Finally, let $\{p_i:i\in\omega\}$ be
an enumeration of the prime numbers and let $$q_s:=\prod_{i\in k}p_{\sigma(i)}^{\epsilon_i}\,.$$
Now, we define $f_3(s):=E_{k}\cup P_{q_s}$. 
It is easy to see that $f_3$ is an injective function from 
$\iseq(A)$ into $\fin(A)$, and since $f_3$ has empty support, $f_3$
belongs to~$\modcV_{\CrevD}$.\medskip

\noindent $[\fm]^2\neq\fm^2$: It is enough to show that there is no
injection from $A^2$ into $[A]^2$. Assume towards a contradiction
that there exists an injection $g_1:A^2\to [A]^2$ with finite support $E_1$.

Let $E_1\subs E$ for some non-empty set $E\in\fin(A)$ such that $P_n\subs E$ whenever $E\cap P_n\neq\emptyset$. Then $E$ is also a 
support of~$g_1$. Now $|[E]^2|<|E^2|$, which implies that there exists
a pair $\la x,y\ra\in E^2$ such that $g_1(\la x,y\ra)\notin [E]^2$.
So, there exists a $\pi\in\Fix_{\nG}(E)$ such that $\pi g_1(\la x,y\ra)\neq
g_1(\la x,y\ra)$, but $\pi\la x,y\ra=\la x,y\ra$, which contradicts the fact that
$E$ is a support of $g_1$.\smallskip

\noindent $\fm^2\neq\iseq(\fm)$: It is enough to show that there is no
injection from $\iseq(A)$ into $A^2$. Assume towards a contradiction
that there exists an injection $g_2:\iseq(A)\to A^2$ with finite support $E_2$.
Let $E_2\subs E$ for some non-empty set $E\in\fin(A)$ such that $P_n\subs E$ whenever $E\cap P_n\neq\emptyset$. Then $E$ is also a 
support of~$g_2$. Now $|E^2|<|\iseq(E)|$, and by similar arguments as above,
we obtain a contradiction.

\noindent $\iseq(\fm)\neq\fin(\fm)$: It is enough to show that there is no
injection from $\fin(A)$ into $\iseq(A)$. Assume towards a contradiction
that there is an injection \hbox{$g_3:\fin(A)\to\iseq(A)$} with finite support $E_3$, where we can assume that for all $n\in\omega$, $P_n\subs E_3$ whenever $E_3\cap P_n\neq\emptyset$.

Since $E_3$ is finite, there exists an $n\in\omega$ such that $g_3(P_n)\notin\iseq(E_3)$.
Let $a\in A$ be the first element of the sequence $g_3(P_n)$ which does not belong to~$E_3$. 
Then we find a $\pi\in\Fix_{\nG}(E_3)$ such that $\pi a\neq a$ (which implies $\pi g_3(P_n)\neq
g_3(P_n)$) but $\pi P_n=P_n$, which contradicts the fact that $E_3$ is a support of~$g_3$.
\end{proof}

So, the model $\modcV_{\CrevD}$ witnesses the following

\begin{conc} 
The existence of an infinite cardinal\/ $\fm$ satisfying
\[\xymatrix@R=10mm@C=3mm{
\fin(\fm)\ar@{<-}[rr]& &\iseq(\fm)\\ 
[\fm]^2& &\fm^2\ar@{<-}[ll]\ar@{->}[u]
}
\]
is consistent with\/ $\ZF$.
\end{conc}
\subsection{A Model for Diagram \protect\ZrevD} 

As mention above, Diagram~\protect{\ZrevD}
holds in the {\it Ordered Mostowski Model\/}, 
where $\fm$ is the cardinality of the set of atoms (see, for example, 
\cite[Related\;Result\,48,\;p.\,217]{cst}). This
leads to the following

\begin{conc} 
The existence of an infinite cardinal\/ $\fm$ satisfying
\[\xymatrix@R=10mm@C=3mm{
\fin(\fm)\ar@{->}[rr]& &\iseq(\fm)\\ 
[\fm]^2& &\fm^2\ar@{<-}[ll]\ar@{->}[ull]
}
\]
is consistent with\/ $\ZF$.
\end{conc}              
\section{On Diagram $\CDiag$} 

Similar as in the proof of {\LEM}\;\ref{lem:N}, in every model for
Diagram\;$\CDiag$, 
we have that the cardinality $\fm$ is transfinite.

\begin{lem}\label{lem:C}
If\/ $\fm^2\le[\fm]^2$ and\/ $\fin(\fm)\le\iseq(\fm)$ for some\/ $\fm\ge\fone$, 
then\/ $\aleph_0\le\fm$.
\end{lem}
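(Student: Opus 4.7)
The plan is to mimic the construction in the proof of Lemma~\ref{lem:N} and produce, by recursion on $n\in\omega$, a sequence $\la a_1,a_2,\dots\ra$ of pairwise distinct elements of~$A$, yielding the desired injection $\omega\to A$.

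Pick any $a_1\in A$ (available because $\fm\ge\fone$). Suppose that $\la a_1,\dots,a_n\ra$ has been defined, and write $B_n=\{a_1,\dots,a_n\}$. First, apply $f$: since $|B_n^2|=n^2>\binom{n}{2}=|[B_n]^2|$ for every $n\ge 1$, the restriction of $f$ to $B_n^2$ cannot land entirely in $[B_n]^2$. Using the lexicographic order on $B_n^2$ induced by $\la a_1,\dots,a_n\ra$, let $\la x,y\ra$ be the least pair for which $f(\la x,y\ra)=\{u,v\}\nsubseteq B_n$. If only one of $u,v$ lies outside $B_n$, that element is canonically $a_{n+1}$; otherwise both are new and we must canonically single one out.

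To break the symmetry between $u$ and $v$, invoke $g$, exploiting the fact that $\iseq(A)$ carries canonical length and position structure which $[A]^2$ lacks. The natural move is to consider the subsets of $B_n\cup\{u,v\}$ that are \emph{symmetric} in $u,v$, i.e.\ those of the form $T$ or $T\cup\{u,v\}$ for $T\subs B_n$; there are $2^{n+1}$ of them, and they are canonically enumerated by $\fin(B_n)$. One then sets $a_{n+1}$ to be the first entry (in the canonical joint order) of $g(S)$ lying outside $B_n$, as $S$ ranges over these symmetric subsets.

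The main obstacle is to show that such an entry always exists. A direct pigeonhole comparing $2^{n+1}$ with $|\iseq(B_n)|=\sum_{k=0}^n n!/(n-k)!$ succeeds for small $n$, but already fails from $n=3$ onwards. For larger $n$ one enlarges the pool by bringing in more new elements: the set $U_n:=\bigcup f[B_n^2]\setminus B_n$ grows with $n$ by a second pigeonhole on $f$, supplying more symmetric subsets. Alternatively, in the tight regime where $g$ restricted to the symmetric subsets bijects onto $\iseq(B_n)$, one turns instead to the non-symmetric subsets: for each $T\subs B_n$ the unordered pair $\{g(T\cup\{u\}),g(T\cup\{v\})\}$ must then consist of two sequences both containing elements outside $B_n$, and the canonical length function on $\iseq(A)$ (together with further $g$-probes when lengths coincide) selects a canonical new element from the union of their ranges. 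Threading this combinatorial bookkeeping so that $a_{n+1}$ is canonically defined at every stage is the delicate heart of the argument; once it is in place, the resulting sequence $\la a_n\ra_{n\in\omega}$ embeds $\omega$ into $A$.
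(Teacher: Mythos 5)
Your proposal has a genuine gap, and it sits exactly where you flag it: the symmetry-breaking step is never actually carried out. The element-by-element strategy forces you, at each stage, to canonically select one element of an unordered pair $\{u,v\}$ produced by $f$, and the tool you reach for --- a pigeonhole comparing the $2^{n+1}$ symmetric subsets of $B_n\cup\{u,v\}$ against $\iseq(B_n)$ --- fails for $n\ge 3$, as you concede, since $|\iseq(B_n)|$ grows like $n!$ while your pool of probes grows only exponentially. The two proposed repairs (enlarging $U_n$, or probing non-symmetric subsets and comparing lengths with ``further $g$-probes'') are sketches whose termination and canonicity are precisely what would need to be proved; in the worst case nothing forces any of these probes to single out one of $u$ and $v$, and without that the recursion cannot be defined in $\ZF$. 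So as written the argument does not close.

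The paper's proof avoids the problem by never choosing individual elements at all. Starting from $E_0=\{a_0\}$, one sets $E_{n+1}$ to be the union of \emph{all} the pairs $f(\la a,b\ra)$ with $a,b\in\bigcup_{i\le n}E_i$, minus $\bigcup_{i\le n}E_i$; this is a canonically defined finite set, non-empty because $k^2>\binom{k}{2}$ and $f$ is injective. This yields a sequence $\la E_n:n\in\omega\ra$ of pairwise disjoint non-empty finite subsets of $A$ with no choice made. Only then does $g$ enter: the sequences $S_n:=g(E_n)$ are pairwise distinct injective finite sequences, so their ranges cannot all lie in a fixed finite set, and concatenating $S_0{}^\frown S_1{}^\frown\cdots$ and enumerating first occurrences gives the injection $\omega\to A$. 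In short, the missing idea is to defer all linear ordering to a single application of $g$ at the end, rather than trying to break the $\{u,v\}$ symmetry at every stage; your use of $f$ via the inequality $n^2>\binom{n}{2}$ is the right first ingredient, but the second half needs to be replaced by this set-accumulation argument.
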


\begin{proof} Assume that $\fm^2\le[\fm]^2$ and $\fin(\fm)\le\iseq(\fm)$ 
for some cardinal $\fm\ge\fone$
and let $A$ be a necessarily infinite set with $\card{A}=\fm$. 
Let $f:A^2\to [A]^2$ and $g:\fin(A)\to\iseq(A)$ be injections.
The goal is to construct with the functions $f$ and $g$ an injection $h:\omega\to A$.
We first construct a countably infinite set of pairwise disjoint 
non-empty finite subsets of $A$. For this, we first
choose an element $a_0\in A$, let $E_0:=\{a_0\}$, and let $\nE_0:=\{E_0\}$.

Assume that for some $n\in\omega$ 
we have already constructed an $(n+1)$-element set $\nE_n:=\blcb E_i:i\le n\brcb$
of pairwise disjoint non-empty finite subsets of $A$. 
Let $$E_{n+1}:=\bigcup_{i,j\le n}\!\Big{\{}x:\exists a\in E_i\,\exists b\in E_j\,\exists y
\bigl(f(\la a,b\ra)=\{x,y\}\bigr)\Big{\}}\setminus\bigcup_{i\le n}E_i\,,$$
and let $\nE_{n+1}:=\nE_n\cup \{E_{n+1}\}$. Notice that for $k:=|\bigcup_{i\le n}E_i|$,
we have $$\bigg{|}\biggl{(}\bigcup_{i\le n}E_i\biggr{)}{\,}^2\bigg{|}\;=\;k^2\,>\,\binom k2
\;=\;\bigg{|}\biggl[\,\bigcup_{i\le n}E_i\biggr]^2\bigg{|}\,,$$ 
which implies that $E_{n+1}\neq\emptyset$. 
Proceeding this way,  $\blcb E_n:n\in\omega\brcb$ is a countably infinite 
set of pairwise disjoint non-empty finite subsets of $A$.

Now, we apply the function $g$. For every $n\in\omega$, let $S_n:=g(E_n)$. 
Furthermore, let $\nS_0:=S_0$, and in general, for $n\in\omega$ let $\nS_{n+1}:=
\nS_n{}^\frown S_{n+1}$. In this way, we obtain an infinite sequence
$\nS_\infty$ of elements of~$A$. Since $g$ is injective and the non-empty finite sets
$E_n$ are pairwise disjoint, the sequence $\nS_\infty$ must contain infinitely
many pairwise distinct elements of~$A$. Now, let $h$ be the enumeration of these
pairwise distinct elements in the order they appear in $\nS_\infty$. Then
$h:\omega\to A$ is an injection.
\end{proof}

As a consequence of {\PRP}\;\ref{prp:fin-to-one} and {\LEM}\;\ref{lem:C} we get

\begin{cor}
If\/ $\fm^2\le[\fm]^2$ and\/ $\fin(\fm)\le\iseq(\fm)$ 
for some\/ $\fm=\card A\ge\fone$, then there 
exists a finite-to-one function\/ $g:\seq(A)\to\fin(A)$.
\end{cor}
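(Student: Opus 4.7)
The statement is a direct combination of the two results that precede it, so the plan is essentially a two-step application with no additional work required.

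First, I would invoke {\LEM}\;\ref{lem:C} with the given hypotheses $\fm^2\le[\fm]^2$ and $\fin(\fm)\le\iseq(\fm)$ for $\fm=\card A\ge\fone$. The lemma immediately yields $\aleph_0\le\fm$, so $A$ contains a countably infinite subset and in particular $\fm$ is an infinite cardinal.

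Second, with $\aleph_0\le\fm$ established, I would apply {\PRP}\;\ref{prp:fin-to-one} directly to produce a finite-to-one function $g\colon\seq(A)\to\fin(A)$, which is exactly the conclusion sought.

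There is no real obstacle here: both ingredients have been proved in the preceding text, and the corollary is simply a repackaging. The only thing to verify is that the hypotheses line up, and they do verbatim with {\LEM}\;\ref{lem:C}. Hence the whole proof can be written in a single sentence observing that $\aleph_0\le\fm$ by {\LEM}\;\ref{lem:C} and applying {\PRP}\;\ref{prp:fin-to-one}.
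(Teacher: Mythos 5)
Your proposal is correct and matches the paper exactly: the paper presents this corollary as an immediate consequence of {\LEM}\;\ref{lem:C} (which yields $\aleph_0\le\fm$ from the two injections) followed by {\PRP}\;\ref{prp:fin-to-one}, with no further argument. Your remark that $\aleph_0\le\fm$ guarantees $\fm$ is infinite, so the hypotheses of {\PRP}\;\ref{prp:fin-to-one} are met, is the only detail worth checking, and you handled it.
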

\vspace{-1ex}
{\bf Acknowledgement:} 
We would like to thank the referee for her or his careful reading
and the numerous comments and corrections that helped to improve 
the quality of this article.
\vspace{-3ex}



\small{Department of Mathematics\\
ETH Zentrum\\
R\"amistrasse\;101\\
8092 Z\"urich, Switzerland}\bigskip

\small{Einstein Institute of Mathematics\\
The Hebrew University of Jerusalem\\
Givat Ram\\
9190401 Jerusalem, Israel\\[1.2ex]
{\sl and}\\[1.2ex]
Department of Mathematics\\
Hill Center, Busch Campus\\
110 Frelinghuysen Road\\
Piscataway, NJ 08854-8019, U.S.A.
}


\begin{thebibliography}{8}

\bibitem{carla}
{\sc Lorenz Halbeisen and Saharon Shelah}, 
{\it Consequences of arithmetic for set theory\/ {\rm [Sh:488]}},
{\bf The Journal of Symbolic Logic},
59\;(1994), 30--40.

\bibitem{berlin}
{\sc Lorenz Halbeisen and Saharon Shelah}, 
{\it Relations between some cardinals in the absence of
               the axiom of choice\/ {\rm [Sh:699]}}, 
{\bf The Bulletin of Symbolic Logic},
7\;(2001), 237--261.

\bibitem{weird}
{\sc Lorenz Halbeisen}, 
{\it A weird relation between two cardinals}, 
{\bf Archive for Mathematical Logic},
57\;(2018), 593--599.

\bibitem{cst1st}
{\sc Lorenz Halbeisen}, 
{\bf Combinatorial Set Theory: with a gentle introduction to forcing}, 
{\rm (1st.\,ed.)},
{\rm [Springer Monographs in Mathematics]},
{\rm Springer-Verlag},
{\rm London}~(2012).

\bibitem{cst}
{\sc Lorenz Halbeisen}, 
{\bf Combinatorial Set Theory: with a gentle introduction to forcing}, 
{\rm (2nd.\,ed.)},
{\rm [Springer Monographs in Mathematics]},
{\rm Springer-Verlag},
{\rm London}~(2017).


\bibitem{Jechchoice}
{\sc Thomas Jech}, 
{\bf The Axiom of Choice}, 
{\rm [Studies in Logic and the Foundations of Mathematics\;75]},
{\rm North-Holland},
{\rm Amsterdam}~(1973).


\bibitem{Shen1}
{\sc Guozhen Shen}, 
{\it Factorials of infinite cardinals in\/ {\rm ZF}. Part\;I: {$\textsf{ZF}$} results}, 
{\bf The Journal of Symbolic Logic},
85\;(2020), 224--243.

\bibitem{Shen2}
{\sc Guozhen Shen}, 
{\it Factorials of infinite cardinals in\/ {\rm ZF}. Part\,II: Consistency results}, 
{\bf The Journal of Symbolic Logic},
85\;(2020), 244--270.

\end{thebibliography}
\end{document}